\newtheorem{theorem}{Theorem}[section]    
\newtheorem{lemma}[theorem]{Lemma}          
\newtheorem{proposition}[theorem]{Proposition}  
\newtheorem{corollary}[theorem]{Corollary} 
\theoremstyle{definition}
\newtheorem{definition}[theorem]{Definition}
\newtheorem{example}[theorem]{Example}
\def\co{\colon \thinspace}
\newcommand{\Z}{\mathbb{Z}}
\newcommand{\R}{\mathbb{R}}
\newcommand{\C}{\mathbb{C}}
\newcommand{\mB}{\mathcal{B}}
\newcommand{\mD}{\mathcal{D}}
\newcommand{\re}{\mathsf{Re}}
\newcommand{\im}{\mathsf{Im}}
\newcommand{\LWcr}{\textsf{LWcr}}
\newcommand{\SWcr}{\textsf{SWcr}}
\newcommand{\Wcr}{\textsf{Wcr}}
\newcommand{\LWin}{\textsf{LWin}}
\newcommand{\SWin}{\textsf{SWin}}
\newcommand{\Win}{\textsf{Win}}
\title[Curve diagrams for Artin groups of type B]{Curve diagrams for Artin groups of type B}
\author{Tetsuya Ito}
\address{Research Institute for Mathematical Sciences, Kyoto university
Kyoto, 606-8502, Japan}
\email{tetitoh@kurims.kyoto-u.ac.jp}
\urladdr{http://www.kurims.kyoto-u.ac.jp/~tetitoh/}
\subjclass[2010]{Primary~20F36 
, Secondary~20F10,57M07}
\keywords{Artin group, curve diagram, Garside structure}
\begin{document}

\begin{abstract} 
We develop a theory of curve diagrams for Artin groups of type $B$. We define the winding number labeling and the wall crossing labeling of curve diagrams, and show that these labelings detect the classical and the dual Garside length, respectively. A remarkable point is that our argument does not require Garside theory machinery like normal forms, and is more geometric in nature.
\end{abstract}
\maketitle

\section{Introduction}

Let $\mB_{n}$ be the $n$-strand braid group defined by
\[ \mB_{n} = \left\langle \sigma_1,\ldots,\sigma_{n-1} \: 
\begin{array}{|cc}
\sigma_{i}\sigma_{j}\sigma_{i} = \sigma_{j}\sigma_{i}\sigma_{j}, & |i-j|=1 \\
\sigma_{i}\sigma_{j} = \sigma_{j}\sigma_{i}, & |i-j|>1
\end{array} \right\rangle. \]

$\mB_n$ is identified with the mapping class group of an $n$-punctured disc $D_{n}$, the group of diffeotopy classes of diffeomorphisms of $D_{n}$ that fix the boundary pointwise. Using this identification, one can represent a braid by a collection of smooth curves in $D_{n}$ called a {\em curve diagram} (See \cite[Chapter X]{ddrw}). Although the curve diagram representation is elementary, it reflects various deep properties of braids in a surprisingly simple way. For example, a curve diagram provides a geometric interpretation of the Dehornoy ordering of the braid groups \cite{fgrrw}, and one can read both the classical and the dual Garside lengths from the curve diagram \cite{iw,iw0,w} in a direct manner. Moreover, a certain simplifying procedure of curve diagrams provides a combinatorial model of the Teichm\"uller distance \cite{dw}. Thus, it is interesting to develop a theory of curve diagram for other groups that act on surfaces.

In the framework of the theory of Artin groups, the braid group $\mB_{n}$ is treated as an Artin group corresponding to the Dynkin diagram of type $A_{n-1}$. In this paper we deal with $A(B_{n})$, the Artin group corresponding to the Dynkin diagram of type $B_{n}$. The group $A(B_{n})$ is given by the presentation
\[ A(B_{n}) = \left\langle s_{1},\ldots,s_{n} \; 
\begin{array}{|cc}
 s_{1}s_{2}s_{1}s_{2}=s_{2}s_{1}s_{2}s_{1}, &  \\
 s_{i}s_{j} = s_{i}s_{j}, & |i-j|>1 \\
s_{i}s_{i+1}s_{i}=s_{i+1}s_{i}s_{i+1}, & i=2,\ldots,n-1
\end{array}
\right\rangle. \]

In this paper, we develop a theory of curve diagram for Artin groups of type B.
We introduce two labelings on curve diagrams, the {\em winding number labeling} and the {\em wall-crossing labeling} by generalizing the corresponding notions in the curve diagram of braids. 

In Theorem \ref{theorem:lengthformula_c} and Theorem \ref{theorem:lengthformula_dual}, we show that from these labelings one can read the {\em classical Garside length} and the {\em dual Garside length} of $A(B_{n})$. These are length functions of $A(B_{n})$ with respect to certain natural generating sets called the {\em classical simple elements} and the {\em dual simple elements}, respectively. Our main theorems provide a geometric and topological interpretation of such standard length functions.

The classical and the dual simple elements come from natural Garside structures on $A(B_{n})$. Here a Garside structure is a combinatorial and algebraic structure which produces an effectively computable normal form that solves the word and the conjugacy problems. An idea of Garside structure dates back to  Garside's solution of the word and the conjugacy problem of the braid groups \cite{ga}. See \cite{bgg,deh,dp} for the basics of theory of Garside groups.

A remarkable point in a curve diagram argument is that we require no deep Garside theory machinery. In particular, we do not need to use a lattice structure which is the key ingredient in Garside theory, so we do not use normal forms. Our requirement of algebraic properties for the classical and the dual simple elements, stated as Lemma \ref{lemma:keyproperty} and Lemma \ref{lemma:keyproperty_d} respectively, are much weaker than the requirement in developing Garside theory.
Thus, the result in this paper seems to suggest that one can construct a theory of curve diagrams for more general subgroups of the mapping class groups and there might be nice length functions, even if the group does not have a Garside structure.


\section{Curve diagram and its labelings for Artin groups of type B}

Let $D_{2n} =\{z \in \C \: | \: |z| \leq n+1 \} -\{-n,\ldots,-1,1,\ldots,n \}$ be the $2n$-punctured disc. For $i=1,\ldots,n$, we denote the puncture points $-i \in \C$ and $i \in \C$ by $p_{i}$ and $q_{i}$, respectively, and let $r: D_{2n} \rightarrow D_{2n}$ be the half-rotation of the disc $D_{2n}$ defined by $r(z) = -z$.

The braid group $\mB_{2n}= A(A_{2n-1})$ is identified with the mapping class group of $D_{2n}$ as follows: For $i=1,\ldots,n-1$ (resp. $i=n+1,\ldots,2n-1$), a standard generator $\sigma_{i}$ is identified with the isotopy class of the left-handed (clockwise) half Dehn twist along the segment of the real line $[p_{n-i+1},p_{n-i}]$ (resp. $[q_{i-n},q_{i-n+1}]$), and $\sigma_{n}$ is identified with the the isotopy class of the left-handed half Dehn twist along the segment $[p_{1},q_{1}]=[-1,1]$. See Figure \ref{fig:halfDehn}.

\begin{figure}[htbp]
\centerline{\includegraphics[width=70mm]{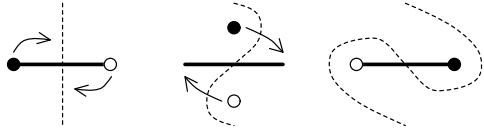}}
   \caption{Left-handed half Dehn twist along the line segment: The position of two punctures integerchanges along the line segment.}
 \label{fig:halfDehn}
\end{figure}

To define the curve diagrams for $A(B_{n})$, we consider the homomorphism 
\[ \Psi:A(B_{n}) \rightarrow \mB_{2n} \]
defined by 
\begin{gather}
\label{eqn:psi}
 \Psi(s_{i}) = 
\begin{cases}
\sigma_{n} & (i=1), \\
\sigma_{n+(i-1)}\sigma_{n-(i+1)} & (i>1).
\end{cases}
\end{gather} 

It is well-known that $\Psi$ is injective.
To see this geometrically, it is convenient to first identify $A(B_{n})$ as the subgroup of the mapping class group of $D_{n+1}$ that preserves the first puncture point $p_{0}$. We regard $D_{2n}$ as the double branched covering of $D_{n+1}$ branched at $p_{0}$. Then $\Psi$ is the map obtained by taking the lift, and is known to be injective by famous Birman-Hilden theorem \cite{bh}.

 Using $\Psi$, we regard an element of $A(B_n)$ as an element the mapping class group of $D_{2n}$. 

Let $\overline{E}$ be the diagram in $D_{2n}$ consisting of the real line segment between the point $-(n+1)$, the leftmost point of $\partial D_{2n}$, and $q_{1}$. Similarly, let $E$ be the diagram in $D_{2n}$ consisting of the real line segment between $p_{n}$ and $q_{1}$. Both $\overline{E}$ and $E$ are oriented from left to right. We denote the line segment of $\overline{E}$ connecting $-(n+1)$ and $p_{n}$ by $E_{0}$, the line segment connecting $p_{n-i+1}$ and $p_{n-i}$ by $E_{i}$ ($i=1,\ldots,n-1$), and the line segment connecting $p_{1}$ and $q_{1}$ by $E_{n}$.

For $i=1,\ldots,n$, let $W_{i}$ be a vertical line segment in the upper half-disc $\{z \in D^{2} \: | \: \textrm{Im}\, z>0 \: \}$ oriented upwards which connects the puncture $p_{i}$ and a point in $\partial D_{2n}$. Similarly, let $W_{i+n}$ be a vertical line segment in the lower half-disc $\{z \in D^{2} \: | \: \textrm{Im}\, z<0 \: \}$ oriented downwards which connects the punture $q_{i}$ and a point in $\partial D_{n}$. See Figure~\ref{fig:curvediagram} (a). We call $W_i$ the {\em walls}, and their union $\bigcup W_{i}$ is denoted $W$. Observe that $r(W_{i})=W_{i+n}$.

\begin{definition}[Curve diagram]
\label{defn:curvediagram}
For $\beta \in A(B_{n})$, the {\em total curve diagram} and the {\em curve diagram} of $\beta$ is the image of the diagrams $\overline{E}$ and $E$, respectively, under a diffeomorphism $\phi$ representing $\Psi(\beta)$ which satisfies the following conditions.

\begin{enumerate}
\item[(i)] $\phi(\overline{E})$ is transverse to $W$, and the number of intersections of $\phi(\overline{E})$ with $W$ is minimal in its diffeotopy class.
\item[(ii)] The number of vertical tangencies (the points $p$ of $\phi(\overline{E})$ where the tangent vector at $p$ is vertical) is minimal in its diffeotopy class.
\item[(iii)] For each puncture point $z \in \{-n,\ldots,-1,1,\ldots,n\}$, there exists a small disc neighborhood $B(z)$ of $z$ such that $B(z) \cap \phi(\overline{E})$ coincides with the real line.
\item[(iv)] $r(\phi(E_{n}))=\phi(E_{n})$. 
\end{enumerate}
\end{definition}

See Figure \ref{fig:curvediagram} (b) for an example. We will use a dotted line to represent $\phi(E_0)$. 
We denote the curve diagram of $\beta$ by $D_{\beta}$ and the total curve diagram by $\overline{D_\beta}$, respectively. Up to diffeotopy, a curve diagram is uniquely determined by $\beta$, so from now on we will often identify an element $\beta \in A(B_{n})$ with its representative diffeomorphism $\phi$ that produces the curve diagram of $\beta$.

Although to develop a theory of curve diagram it is sufficient to consider $D_{\beta}$ and $\overline{D_\beta}$, it is often convenient to make curve diagrams $r$-symmetric by considering $\mD_{\beta} = D_{\beta} \cup r(D_{\beta})$ and $\overline{\mD_\beta} = \overline{D_\beta} \cup r(\overline{D_\beta})$. We call $\mD_{\beta}$ (resp. $\overline{\mD_\beta}$) the {\em completed curve diagram} (resp. the {\em completed total curve diagram}). 

\begin{figure}[htbp]
\centerline{\includegraphics[width=110mm]{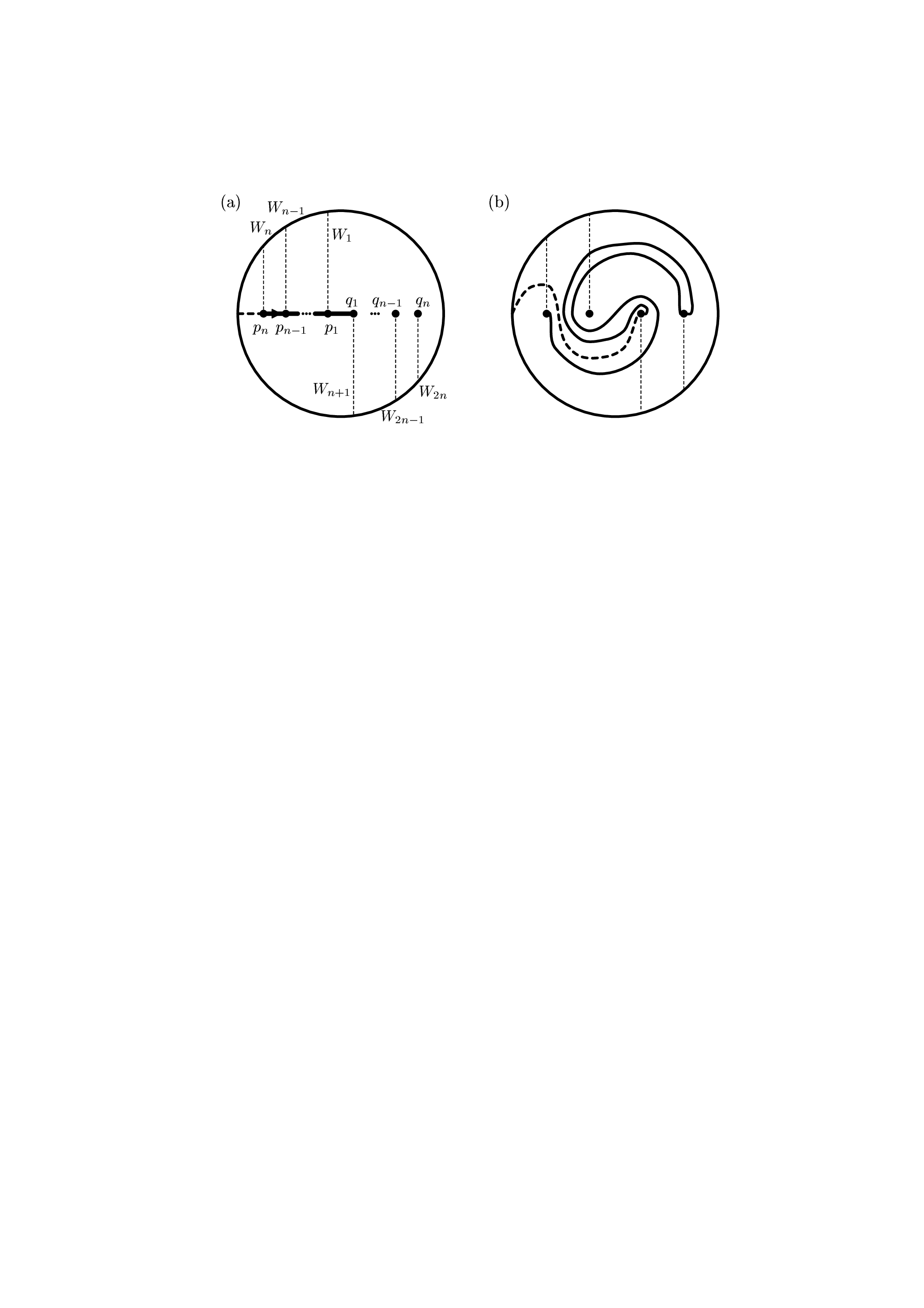}}
   \caption{(a) Punctured disc $D_{2n}$, walls and diagram $\overline{E}$, (b) Curve diagram of $s_{1}^{-1}s_{2}s_{1} \in A(B_{2})$.}
 \label{fig:curvediagram}
\end{figure}

We denote the union of the neighborhood $B(z)$ in Definition \ref{defn:curvediagram} (iii) by $B$. A point $x$ on a curve diagram which is not contained in $B$ is called {\em regular} if $x$ is neither a vertical tangency nor an intersection point with walls. For a regular point of the curve diagram, we assign two integers, the {\em winding number labeling} and the {\em wall-crossing labeling} as follows.

To introduce labelings, we temporary modify the curve diagram near the puncture points. For each puncture point $z$ that lies on $D_{\beta}$ other than $\beta(q_{1})$, we modify the curve diagram $D_{\beta}$ in $B(z)$ as shown in Figure \ref{fig:modification}, to miss the punctures.
Then the resulting diagram can be regarded as an arc in $D_{2n}$, which we still call the curve diagram of $\beta$ by abuse of notation.

\begin{figure}[htbp]
\centerline{\includegraphics[width=55mm]{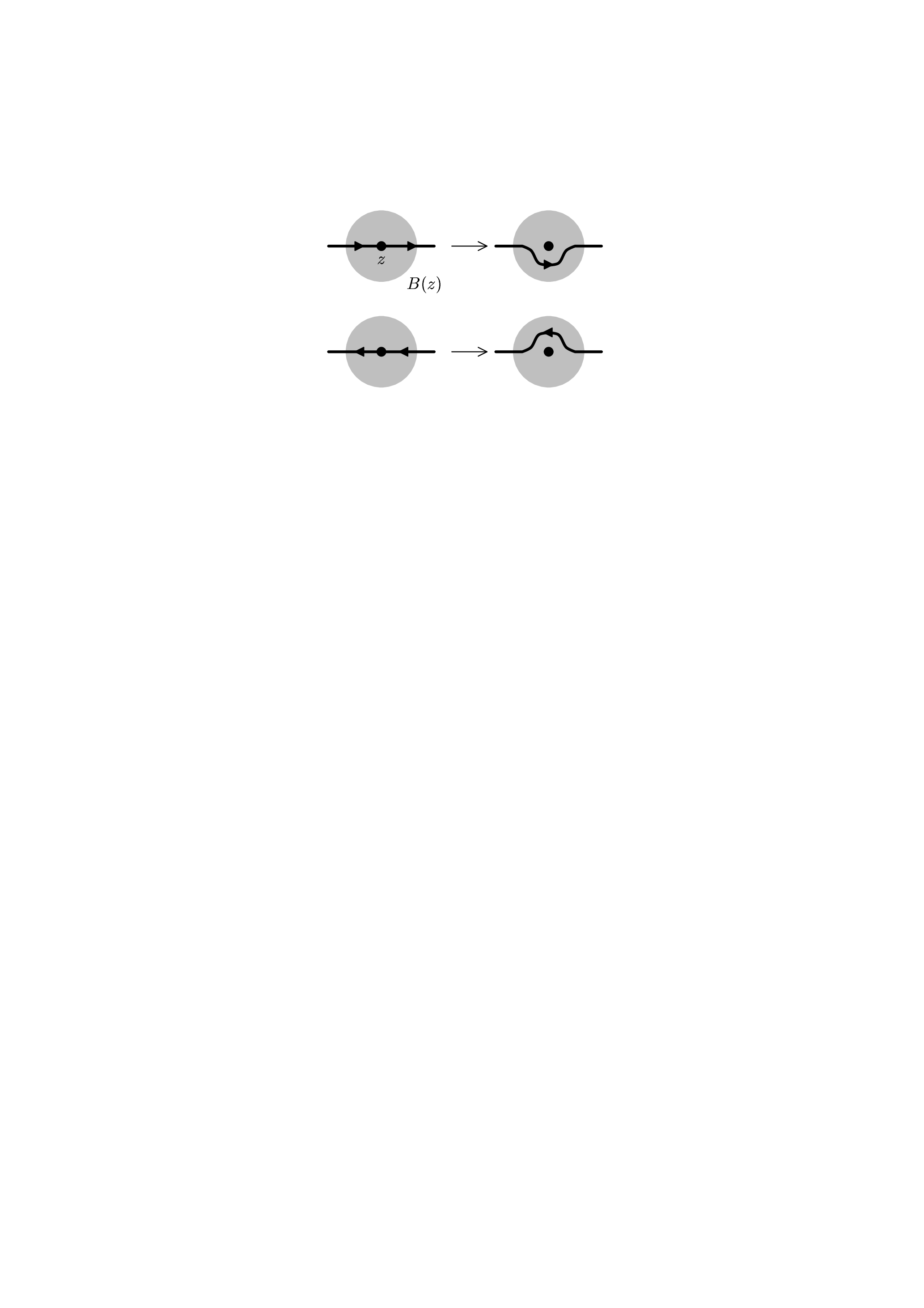}}
   \caption{Modification near puncture points}
 \label{fig:modification}
\end{figure}

Take a smooth parametrization of the modified version of a curve diagram $\gamma\co [0,1] \rightarrow D^{2}\subset \C$ and let $T_{\gamma} : [0,1] \rightarrow S^{1}=\R\slash \Z$ be the direction map defined by $T_{\gamma}(t) = \gamma'(t)/\|\gamma'(t)\|$.  
Take a lift $\widetilde{T_{\gamma}}: [0,1] \rightarrow \R$ of $T_{\gamma}$ so that $\widetilde{T_{\gamma}}(0) = 0$. Then $\widetilde{T_{\gamma}}(t) \in \Z +\frac{1}{2}$ if and only if $\gamma(t)$ is a vertical tangency. 
For a regular point $x=\gamma(t) \in \overline{D_\beta}$, we assign the integer $\Win(x)=R(\widetilde{T_{\gamma}}(t))$, where $R: \R \rightarrow \Z$ is a rounding function which sends real numbers to the nearest integers. We call $\Win(x)$ the {\em winding number labeling} at $x$.
Similarly, we assign the integer $\Wcr(x)$ defined by the algebraic intersection number of the arc $\gamma([0,t])$ and walls $W$. We call $\Wcr(x)$ the {\em wall crossing labeling} at $x$.

Geometrically, these definitions say that the winding number labeling counts how many times the curve $\gamma([0,t])$ winds the plane and the wall-crossing labeling counts how many times $\gamma([0,t])$ crosses the walls.

\begin{example}
Figure \ref{fig:label_example} shows an example of the winding and the wall crossing labeling for $\beta=s_{1}^{-1}s_{2}s_{1}\in A(B_{2})$. The classical and the dual normal forms of $\beta$ are $N_{\sf classical}(\beta) = (s_{2}s_{1}s_{2})(s_{2}s_{1})\Delta^{-1}$ and $N_{\sf dual}(\beta) = (s_{1}^{-1}s_{2}s_{1})$, respectively.

\begin{figure}[htbp]
\centerline{\includegraphics[width=110mm]{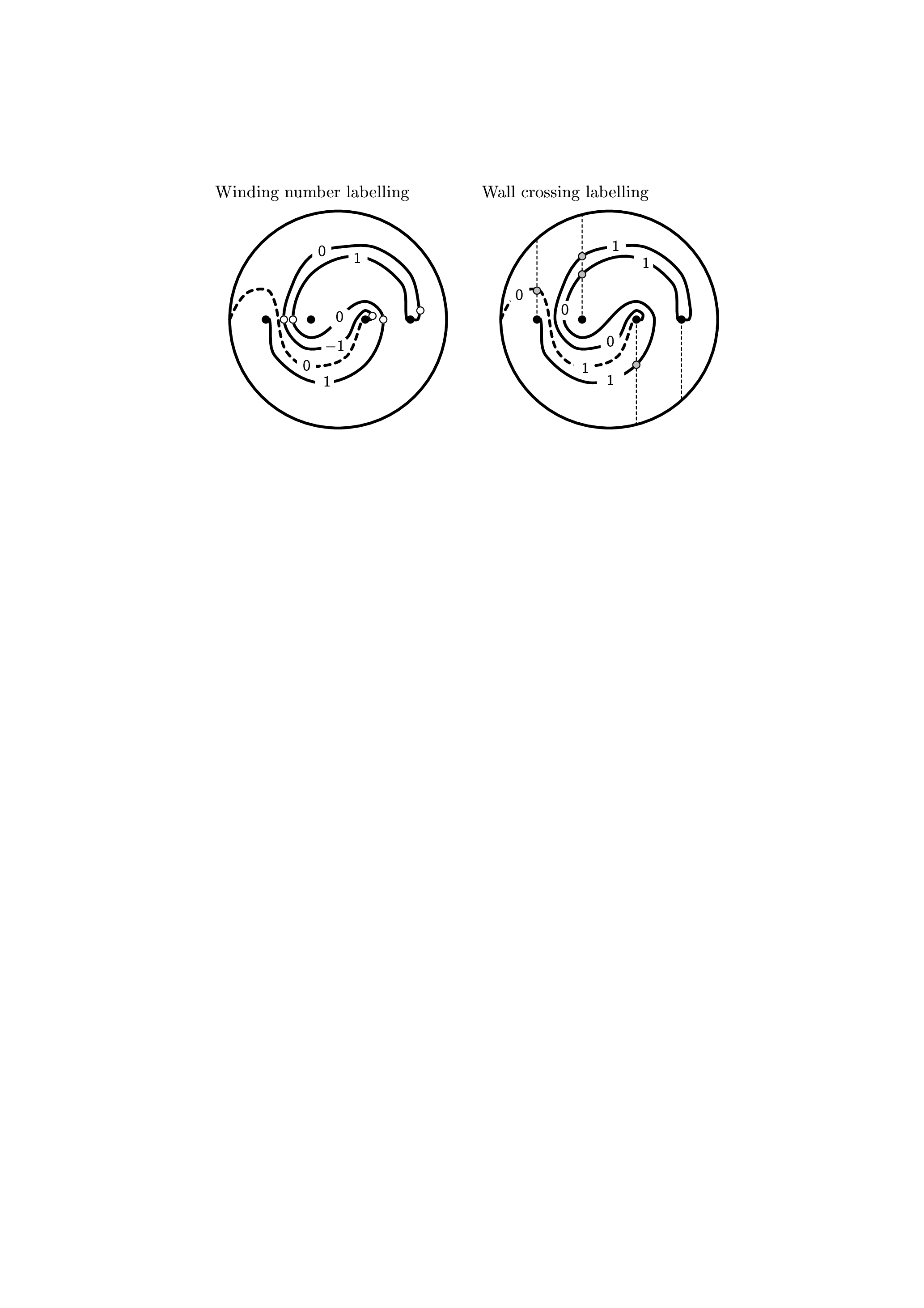}}
   \caption{The winding number and the wall crossing labelings for the curve diagram of $\beta=s_{1}^{-1}s_{2}s_{1} \in A(B_{2})$. In the left figure, white circles represent vertical tangencies, and in the right figure, gray circles represent the intersections with walls.}
 \label{fig:label_example}
\end{figure}
\end{example}

The winding number and the wall crossing labelings for the completed (total) curve diagrams $\mD_{\beta}$ ($\overline{\mD_\beta}$) are defined in a similar way. Since the action of $A(B_{n})$ on $D_{2n}$ and all the ingredients appearing in the definition of labelings, such as winding numbers or walls, are $r$-symmetric, the labelings of $r(D_{\beta})$ is determined by $D_{\beta}$.
That is, for a regular point $x \in D_{\beta}$, we have an $r$-symmetry
\begin{equation}
\label{eqn:symmetry}
 \Win(r(x))=\Win(x) \textrm{ and } \Wcr(r(x))=\Wcr(x).
\end{equation}

By Definition \ref{defn:curvediagram} (iv), $r(\beta(E_n))$ is, as a curve, identical with $\beta(E_n)$ but their orientations are opposite. However, (\ref{eqn:symmetry}) says that the labelings of the completed curve diagram $\mD_{\beta}$ is well-defined on $r(\beta(E_n)) = \beta(E_{n})$.

For $\beta \in A(B_n)$, we define $\LWin(\beta)$ and $\LWcr(\beta)$ as the largest winding number and wall crossing number labelings occurring in $D_{\beta}$. Similarly, we define $\SWin(\beta)$ and $\SWcr(\beta)$ as the smallest winding number and wall crossing number labelings in $D_{\beta}$. 
We remark that to define these numbers, we only consider the labelings of the curve diagram $D_{\beta}$, not the total curve digram $\overline{D_{\beta}}$. However, we need the total curve diagram in order to define the labelings.

The following is a direct consequence of the definition of labelings.
 
\begin{lemma}
For $\beta \in A(B_{n})$, the following three conditions are equivalent.
\begin{enumerate}
\item $\beta=1$.
\item $\SWin(\beta)=\LWin(\beta)=0$.
\item $\SWcr(\beta)=\LWcr(\beta)=0$.
\end{enumerate}
\end{lemma}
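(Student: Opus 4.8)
The plan is to prove the chain of implications $(1)\Rightarrow(2)$, $(1)\Rightarrow(3)$, $(2)\Rightarrow(1)$, and $(3)\Rightarrow(1)$; by the obvious symmetry between the two labelings, the arguments for $(1)\Rightarrow(3)$ and $(3)\Rightarrow(1)$ will simply mirror those for the winding-number labeling. First, for $(1)\Rightarrow(2)$ and $(1)\Rightarrow(3)$: when $\beta=1$ we may take the representing diffeomorphism $\phi$ to be the identity, so $D_\beta = E$ is the horizontal real line segment from $p_n$ to $q_1$ (with the modification of Figure~\ref{fig:modification} near the intervening punctures). Away from the puncture neighborhoods this curve is a straight horizontal segment traversed left to right, so the direction map $T_\gamma$ is constantly $0 \in S^1$ except near the modified punctures, where the curve makes a small detour and returns; the lift $\widetilde{T_\gamma}$ therefore never leaves $(-\tfrac12,\tfrac12)$, giving $\Win(x)=0$ at every regular point, hence $\SWin(\beta)=\LWin(\beta)=0$. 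Likewise $E$ lies entirely on the real axis, disjoint from the walls $W$ (which sit in the open upper and lower half-discs), so the algebraic intersection number with $W$ is identically $0$ and $\SWcr(\beta)=\LWcr(\beta)=0$.

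For the converse directions, the key observation is that the winding-number labeling (resp. wall-crossing labeling) being identically zero forces the curve diagram to be isotopic, rel its behavior near punctures and rel endpoints, to the standard diagram $E$. Concretely: if $\Win(x)=0$ for all regular $x$, then $\widetilde{T_\gamma}$ stays in $(-\tfrac12,\tfrac12)$, which by condition (ii) of Definition~\ref{defn:curvediagram} (minimality of vertical tangencies) means $D_\beta$ has \emph{no} vertical tangencies at all, i.e. $D_\beta$ is a graph over the real axis — a curve monotone in the horizontal direction. A horizontally-monotone arc from $p_n$ to $q_1$ in $D_{2n}$ satisfying the normalization (iii) near punctures, with no winding, is forced to be the straight segment $E$ up to isotopy; hence $\phi$ fixes $E$ up to isotopy. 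Similarly, if $\Wcr(x)=0$ for all regular $x$, then by condition (i) (minimality of intersections with $W$) the curve $D_\beta$ is actually disjoint from $W$; since the complement $D_{2n}\setminus W$ is simply connected and the walls separate the punctures, an arc from $p_n$ to $q_1$ in this complement, normalized by (iii), must again be isotopic to $E$. In either case we conclude $\Psi(\beta)$ fixes the arc $E$ up to isotopy.

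It then remains to deduce $\beta=1$ from the fact that $\Psi(\beta)$ fixes $E$ up to isotopy. Here one uses that $E$, together with the remaining standard arcs $E_0,\ldots,E_n$ and the disc structure, rigidifies the mapping class: a mapping class of $D_{2n}$ fixing the full diagram $\overline E$ up to isotopy must be trivial, since $\overline E$ cuts the disc into a disc and the complementary information (positions of all punctures along the spine) is recorded. One must be slightly careful that zero winding/wall-crossing on $D_\beta = \phi(E)$, rather than on the total diagram $\overline{D_\beta}=\phi(\overline E)$, is enough; but the segment $E_0$ connecting $-(n+1)$ to $p_n$ carries no essential information beyond connecting the boundary to $p_n$, so fixing $\phi(E)$ up to isotopy rel endpoints in fact fixes $\phi(\overline E)$ up to isotopy, and hence $\Psi(\beta)=1$ in the mapping class group of $D_{2n}$. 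Since $\Psi$ is injective, $\beta=1$.

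I expect the main obstacle to be the rigidity step in the converse: turning ``$D_\beta$ has no vertical tangencies'' (resp. ``$D_\beta$ misses $W$'') cleanly into ``$D_\beta$ is isotopic to $E$'' requires invoking the minimality conditions (i)--(iii) of Definition~\ref{defn:curvediagram} in just the right way — in particular ruling out the possibility that $D_\beta$ is monotone but wraps around some punctures, which is where condition (iii) (the standardization near punctures) and the precise meaning of ``algebraic'' versus ``geometric'' intersection number with the walls must be used carefully. Once the curve is pinned down to $E$, the passage back to $\beta=1$ via injectivity of $\Psi$ is routine.
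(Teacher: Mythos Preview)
The paper gives no proof of this lemma at all---it is stated as ``a direct consequence of the definition of labelings'' and left to the reader. Your proposal is therefore far more detailed than anything in the paper, and the overall strategy (forward implications are trivial; for the converse, zero labeling plus the minimality conditions (i)--(ii) of Definition~\ref{defn:curvediagram} force the diagram to have no vertical tangencies, resp.\ to miss the walls, and hence to be isotopic to the standard $E$) is correct and is exactly what ``direct consequence of the definitions'' unpacks to.

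One point to tighten: you write ``a horizontally-monotone arc from $p_n$ to $q_1$'', but a priori $D_\beta$ runs from $\phi(p_n)$ to $\phi(q_1)$, and nothing yet forces these to equal $p_n$ and $q_1$. The cleanest way to close this, and simultaneously to handle your acknowledged $E_0$ issue, is to use the $r$-symmetry built into the setup rather than arguing about $E_0$ directly. By (\ref{eqn:symmetry}) the labelings on $r(D_\beta)$ agree with those on $D_\beta$, so the completed diagram $\mD_\beta = D_\beta \cup r(D_\beta)$ also has all winding (resp.\ wall-crossing) labels equal to $0$; hence $\mD_\beta$ is a horizontally monotone arc passing through \emph{all} $2n$ punctures, which forces it to be the real segment through all punctures up to isotopy. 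Since $\Psi(\beta)$ commutes with $r$ and fixes this full arc system (which cuts $D_{2n}$ into a disc), $\Psi(\beta)=1$ and injectivity of $\Psi$ gives $\beta=1$. This replaces your $E_0$ paragraph with a one-line appeal to $r$-symmetry, which is more in the spirit of how the paper organizes things.
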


\section{Length formula}

\subsection{Classical Garside length and winding number labelings} 

To state our main theorem, first we recall the definitions of the classical simple elements. Since we want to avoid algebraic machinery as possible, we use the following geometric definition.

\begin{definition}
\label{defn:csimple}
An element $x \in A(B_{n})$ is called a classical simple element if as a mapping class, $x$ is described as the following $r$-symmetric dance of punctures:
\begin{description}
\item[Step 1] Perform a clockwise rotation of angle $\pi\slash 2$ so that all punctures $\{p_1,\ldots,p_n,q_1,\ldots,q_n\}$ lie on the imaginary axis.
\item[Step 2] Move punctures horizontally so that the followings are satisfied:
\begin{enumerate}
\item $\{\re(p_1),\ldots,\re(p_n),\re(q_{1}), \ldots, \re(q_{n})\} =\{-n,\ldots, -1,1,\ldots,n\}$.  
\item $r(p_{i})=q_{i}$ holds for all $i=1, \ldots,n$.
\end{enumerate}
\item[Step 3]
Move the punctures vertically so that all punctures lie on the real axis.
\end{description}

The classical Garside element $\Delta$ is an element of $A(B_{n})$ that corresponds to the clockwise half-rotation of the disc $D_{2n}$. 
\end{definition}

We denote the set of all classical simple elements by $[1,\Delta]$. 
Since the standard generators $s_i$ are classical simple elements, $[1,\Delta]$ generates $A(B_{n})$. For $\beta \in A(B_n)$, the {\em classical Garside length} $\ell_{\sf classical}(\beta)$ is the length of $\beta$ with respect to the classical simple elements $[1,\Delta]$.

Now we are ready to state the first main theorem of this paper, which generalizes the corresponding theorem for curve diagrams of braid groups \cite[Theorem 2.1]{w}.

\begin{theorem}
\label{theorem:lengthformula_c}
For $\beta \in A(B_{n})$, $\ell_{\sf classical}(\beta) = \max\{ \LWin(\beta),0 \} - \min \{ \SWin(\beta),0 \}$.
\end{theorem}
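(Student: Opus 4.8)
The plan is to set $f(\beta)=\max\{\LWin(\beta),0\}-\min\{\SWin(\beta),0\}$ and prove the two inequalities $f(\beta)\le\ell_{\sf classical}(\beta)$ and $\ell_{\sf classical}(\beta)\le f(\beta)$ separately. Since $\SWin(\beta)\le\LWin(\beta)$ always, $f(\beta)=0$ forces $\SWin(\beta)=\LWin(\beta)=0$, so by the lemma at the end of Section~2 the condition $f(\beta)=0$ is exactly $\beta=1$; this fixes the normalisation and supplies the base case of the induction below.

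For $f(\beta)\le\ell_{\sf classical}(\beta)$ I would first establish a monotonicity statement: for every $\beta$ and every classical simple element $x\in[1,\Delta]$, right multiplication moves the extreme winding labels in only one direction and by at most one, say
\[ \LWin(\beta x)\le\LWin(\beta),\qquad \SWin(\beta x)\ge\SWin(\beta)-1, \]
and dually $\LWin(\beta x^{-1})\le\LWin(\beta)+1$ and $\SWin(\beta x^{-1})\ge\SWin(\beta)$. The point is that $D_{\beta x}=\Psi(\beta)(D_x)$ is the $\Psi(\beta)$-image of the curve diagram $D_x$, and $D_x$, being the diagram of an element with $1\le x\le\Delta$, carries winding data within one unit of that of $E=D_1$; transporting this relation by $\Psi(\beta)$ keeps the labels in the stated range, and Lemma~\ref{lemma:keyproperty} is precisely the closure property of $[1,\Delta]$ that makes this work without appealing to normal forms. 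An elementary case check on $\max\{\cdot,0\}$ and $\min\{\cdot,0\}$ then gives $f(\beta x^{\pm1})\le f(\beta)+1$, and iterating this bound along a geodesic word $\beta=x_1^{\epsilon_1}\cdots x_k^{\epsilon_k}$ (so $k=\ell_{\sf classical}(\beta)$) starting from the identity yields $f(\beta)\le k$.

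For $\ell_{\sf classical}(\beta)\le f(\beta)$ I would induct on $f(\beta)$. If $f(\beta)\ge1$ then, since $\SWin(\beta)\le\LWin(\beta)$, either $\LWin(\beta)\ge1$ or $\SWin(\beta)\le-1$. When $\LWin(\beta)=L\ge1$ the crucial step is a geometric lemma producing a classical simple element $x$ with $\LWin(\beta x)\le L-1$ and $\min\{\SWin(\beta x),0\}\ge\min\{\SWin(\beta),0\}$: one reads $x$ off from the \emph{outermost} part of $D_\beta$ — the sub-arcs carrying the top winding label $L$ — by recording near $\partial D_{2n}$ which punctures they sweep around and in what order, and then promotes this combinatorial data to an honest classical simple element by symmetrising it under the half-rotation $r$ as in Definition~\ref{defn:csimple}; Lemma~\ref{lemma:keyproperty} guarantees that the symmetrised ``dance'' really is a classical simple element and that multiplying by it unwinds precisely one layer. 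The case $\SWin(\beta)\le-1$ is dual, giving an $x$ with $\SWin(\beta x^{-1})\ge\SWin(\beta)+1$ and $\max\{\LWin(\beta x^{-1}),0\}\le\max\{\LWin(\beta),0\}$. In either case a short computation with $\max\{\cdot,0\}$ and $\min\{\cdot,0\}$ shows that the new element $\beta x$ (resp.\ $\beta x^{-1}$) has $f$-value at most $f(\beta)-1$, so by the inductive hypothesis its classical length is at most $f(\beta)-1$; since $\beta$ is recovered from it by right multiplication by the single generator $x^{-1}$ (resp.\ $x$), we conclude $\ell_{\sf classical}(\beta)\le f(\beta)$.

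The main obstacle is the geometric lemma used in the induction step. In the braid case (\cite[Theorem~2.1]{w}) the analogous unwinding element is read off directly, whereas for type $B$ the construction must yield an element that is $r$-symmetric — Definition~\ref{defn:curvediagram}(iv) and the symmetry (\ref{eqn:symmetry}) must be respected throughout — and one must check that $r$-symmetrising the outermost data neither forces an additional layer to be unwound nor disturbs the opposite extreme. The possible overlap between the support of the unwinding element and its $r$-image, the behaviour at the self-symmetric arc $E_n$ (which carries the two opposite induced orientations), and the bookkeeping of the modification near the punctures and of the total diagram $\overline{D_\beta}$ that fixes the winding-number basepoint, are where the real work concentrates; the monotonicity statement for the lower bound is, by contrast, a comparatively routine local computation once Lemma~\ref{lemma:keyproperty} is available.
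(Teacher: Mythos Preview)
Your overall architecture—one inequality from a Lipschitz bound along a geodesic word, the other from an ``unwinding'' induction—is exactly the shape of the paper's argument. But there is a genuine error: your monotonicity inequalities are stated on the wrong side and in the wrong direction. With the paper's convention $D_\beta=\Psi(\beta)(E)$, it is \emph{left} multiplication that acts directly on the curve diagram, since $D_{\alpha\beta}=\Psi(\alpha)(D_\beta)$; right multiplication gives $D_{\beta x}=\Psi(\beta)(D_x)$, and there is no reason the winding labels of $\Psi(\beta)(D_x)$ should be bounded by those of $\Psi(\beta)(E)$ in the way you claim. Concretely, your bound $\LWin(\beta x)\le\LWin(\beta)$ fails already for $\beta=x=s_1$: then $\beta x=s_1^{2}$ is positive of classical length~$2$, so (by the very theorem) $\LWin(s_1^{2})=2>1=\LWin(s_1)$. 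The correct statements, and the ones used in the paper, are for left multiplication: for $x\in[1,\Delta]$ one has $\LWin(x\beta)\le\LWin(\beta)+1$, $\SWin(x\beta)\ge\SWin(\beta)$, and $\LWin(x^{-1}\beta)\le\LWin(\beta)$, $\SWin(x^{-1}\beta)\ge\SWin(\beta)-1$; these follow because $\Psi(x)$ acts on $D_\beta$ locally as clockwise rotation. Your unwinding step should likewise produce $x$ with $\LWin(x^{-1}\beta)\le L-1$, read off from $D_\beta$ and then applied to $D_\beta$—which is again left multiplication. Also, Lemma~\ref{lemma:keyproperty} plays no role in the monotonicity; it is not needed there.

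Once you switch to left multiplication, your direct induction on $f$ is a valid route and is close to the paper's, but the paper organises the reduction differently. Rather than prove the unwinding lemma for arbitrary $\beta$ with $\LWin(\beta)\ge 1$, it first uses Lemma~\ref{lemma:keyproperty} to rewrite any geodesic word as $z_\ell\cdots z_1\Delta^{-\ell_{\sf n}}$ with all $z_i\in[1,\Delta]$, and exploits that $\Delta$ is the half-rotation so that $\LWin(\beta\Delta^{k})=\LWin(\beta)+k$ and $\SWin(\beta\Delta^{k})=\SWin(\beta)+k$ exactly. This reduces everything to the positive case (Proposition~\ref{proposition:key}), where the unwinding lemma (Lemma~\ref{lemma:win}) need only be stated under the hypothesis $\SWin(\beta)\ge 0$. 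Your scheme avoids the $\Delta$-shift at the cost of a nominally stronger unwinding lemma; in fact the same $V$-arc/partial-order construction proves it, since when $\LWin(\beta)>\SWin(\beta)$ the untangling move never equals $\Delta$ and leaves the $V$-arcs of minimal label untouched.
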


It is well-known that classical Garside elements and the classical simple elements have various nice algebraic properties \cite{bs,del}. However, to prove our main theorem and develop a curve diagram theory, we only need the  following, which is directly confirmed from the definition. 

\begin{lemma}
\label{lemma:keyproperty}
If $x \in [1,\Delta]$, then both $\Delta x^{-1}$ and $x^{-1}\Delta$ lies in $[1,\Delta]$.
\end{lemma}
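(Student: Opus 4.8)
The plan is to turn the geometric ``dance'' of Definition~\ref{defn:csimple} into an algebraic identity in the mapping class group and read the answer off as another dance. Write $\rho$ for the clockwise quarter-turn of $D_{2n}$ that appears as Step~1, so that the clockwise half-turn is $\Delta=\rho^{2}$. If $x\in[1,\Delta]$ is realised by a dance with Steps $(\rho,\mathfrak s_{2},\mathfrak s_{3})$, where $\mathfrak s_{2}$ is a horizontal move and $\mathfrak s_{3}$ a vertical move, then composing the three steps gives $x=\mathfrak s_{3}\mathfrak s_{2}\rho$, whence $x^{-1}=\rho^{-1}\mathfrak s_{2}^{-1}\mathfrak s_{3}^{-1}$ and, using $\Delta=\rho^{2}$,
\[
\Delta x^{-1}=\rho\,\mathfrak s_{2}^{-1}\mathfrak s_{3}^{-1}
=\bigl(\rho\,\mathfrak s_{2}^{-1}\rho^{-1}\bigr)\,\bigl(\rho\,\mathfrak s_{3}^{-1}\rho^{-1}\bigr)\,\rho,
\qquad
x^{-1}\Delta=\rho^{-1}\mathfrak s_{2}^{-1}\mathfrak s_{3}^{-1}\rho^{2}
=\bigl(\rho^{-1}\mathfrak s_{2}^{-1}\rho\bigr)\,\bigl(\rho^{-1}\mathfrak s_{3}^{-1}\rho\bigr)\,\rho.
\]

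The decisive point is that conjugation by $\rho^{\pm1}$ interchanges horizontal and vertical moves, because the quarter-turn carries the family of horizontal lines of $D_{2n}$ onto the family of vertical lines. Hence in each displayed identity the first bracket is a vertical move and the second is a horizontal move, so $\Delta x^{-1}$ and $x^{-1}\Delta$ both appear in the shape $\mathfrak t_{3}\,\mathfrak t_{2}\,\rho$ with $\mathfrak t_{2}=\rho^{\pm1}\mathfrak s_{3}^{-1}\rho^{\mp1}$ horizontal and $\mathfrak t_{3}=\rho^{\pm1}\mathfrak s_{2}^{-1}\rho^{\mp1}$ vertical --- the exact format of a dance beginning with the clockwise quarter-turn.

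It remains only to check that this dance satisfies Definition~\ref{defn:csimple}. The move $\mathfrak s_{3}^{-1}$ carries the punctures from the real axis back to the configuration occurring between Steps~2 and~3 of $x$, whose real parts already constitute $\{-n,\dots,-1,1,\dots,n\}$; conjugating by $\rho^{\pm1}$ turns $\mathfrak s_{3}^{-1}$ into a horizontal move carrying the post-Step-1 configuration to one whose real parts still constitute $\{-n,\dots,-1,1,\dots,n\}$, so condition~(1) of Step~2 holds, and condition~(2) survives because $\rho$ commutes with $r$. Likewise $\mathfrak s_{2}^{-1}$ carries the punctures back onto the imaginary axis, so its $\rho^{\pm1}$-conjugate carries them onto the real axis, which is the condition of Step~3. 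Finally the whole dance is $r$-symmetric, since $x$'s is and $\rho r=r\rho$. Hence $\Delta x^{-1},\,x^{-1}\Delta\in[1,\Delta]$.

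The only genuine work lies in the bookkeeping that makes this rigorous: interpreting each ``step'' as a mapping class, justifying $\Delta=\rho^{2}$, and verifying that conjugating a Step-2-type horizontal move by $\rho$ produces a bona fide Step-3-type vertical move (and conversely) with all of Definition~\ref{defn:csimple}'s normalisations intact. Once that dictionary is fixed the identities above are purely formal, which is why the lemma is ``directly confirmed from the definition''. (With the opposite convention for composing mapping classes one writes $x=\rho\,\mathfrak s_{2}\mathfrak s_{3}$ and the same computation gives $\Delta x^{-1}=\rho\,(\rho\,\mathfrak s_{3}^{-1}\rho^{-1})(\rho\,\mathfrak s_{2}^{-1}\rho^{-1})$, still a dance, so the conclusion is unchanged.)
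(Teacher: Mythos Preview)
Your argument is correct and is exactly the kind of direct verification from Definition~\ref{defn:csimple} that the paper alludes to (the paper gives no proof beyond the phrase ``directly confirmed from the definition''); writing the dance as $\mathfrak s_{3}\mathfrak s_{2}\rho$ and conjugating by the quarter-turn is the natural way to make that claim explicit. One small slip in your bookkeeping: when you verify condition~(1) of Step~2 for $\mathfrak t_{2}=\rho^{\pm1}\mathfrak s_{3}^{-1}\rho^{\mp1}$, what matters after conjugation by the quarter-turn is the set of \emph{imaginary} parts of the between-Steps-2-and-3 configuration of $x$ (these become the real parts after $\rho^{\pm1}$), and those are indeed $\{-n,\dots,-1,1,\dots,n\}$ because Step~2 of $x$ is horizontal and the post-Step-1 configuration already has imaginary parts in that set --- not because the real parts ``already'' lie there, as your phrasing suggests.
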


To prove theorem \ref{theorem:lengthformula_c}, it is sufficient to observe the following. Recall that $\beta \in A(B_{n})$ is {\em classical positive} (resp. {\em classical negative}) if $\beta$ is written as a product of positive (resp. negative) classical simple elements $[1,\Delta]$.

\begin{proposition}
\label{proposition:key}
If $\beta \in A(B_{n})$ is classical positive, then $\LWin(\beta) = \ell_{\sf classical}(\beta)$ and $\SWin(\beta)\geq 0$.
Similarly, if $\beta \in A(B_{n})$ is classical negative, then $\SWin(\beta) = \ell_{\sf classical}(\beta)$ and $\LWcr(\beta) \leq 0$.
\end{proposition}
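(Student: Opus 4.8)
The plan is to reduce Proposition~\ref{proposition:key} to two ``local'' facts about the effect of a single classical simple element on a curve diagram, and then run an induction. The starting observation is that, with the usual left-action convention, the total curve diagram of a product satisfies $\overline{D_{\alpha x}}=\phi_{\alpha}(\overline{D_{x}})$, where $\phi_{\alpha}$ is a diffeomorphism representing $\Psi(\alpha)$. Thus $\overline{D_{\alpha x}}$ and $\overline{D_{\alpha}}=\phi_{\alpha}(\overline{E})$ are the images of the two diagrams $\overline{D_{x}}$ and $\overline{E}$ under the \emph{same} diffeomorphism, and since $x\in[1,\Delta]$ is the explicit $r$-symmetric dance of Definition~\ref{defn:csimple}, $\overline{D_{x}}$ differs from $\overline{E}$ in a completely controlled way (one checks directly from the dance that $\overline{D_{x}}$ is ``wound at most one positive turn'', and never wound negatively). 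Everything therefore reduces to comparing the labelling of the image of such a diagram, under an arbitrary diffeomorphism, with the labelling of the image of $\overline{E}$.

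The first fact (a local upper bound) I would prove is: for every $\alpha\in A(B_{n})$ and every $x\in[1,\Delta]^{\pm1}$ one has $\LWin(\alpha x)\le\LWin(\alpha)+1$ and $\SWin(\alpha x)\ge\SWin(\alpha)-1$, and in addition, for $x\in[1,\Delta]$, $\SWin(\alpha x)\ge 0$ whenever $\SWin(\alpha)\ge 0$. One proves this by tracking the lifted direction map $\widetilde{T_{\gamma}}$ along $\overline{D_{\alpha x}}$: Step~1 of the dance rotates all tangent directions of $\overline{E}$ by a fixed amount, while Steps~2--3 are supported in disjoint vertical and horizontal strips and contribute only a bounded, sign-definite amount of extra turning, so $\overline{D_{x}}$ lies in a neighbourhood of $\overline{E}$ across which the rounding function $R$ can move by at most one unit and, if $\SWin(\alpha)\ge 0$, cannot dip below $0$. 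The genuinely new point relative to the braid case is that the $r$-symmetry of Definition~\ref{defn:curvediagram}(iv) must be propagated through this bookkeeping, which is exactly why the dance is organised into three $r$-symmetric steps. Iterating the estimate along a geodesic expression $\beta=g_{1}\cdots g_{m}$ (with $m=\ell_{\sf classical}(\beta)$ and $g_{i}\in[1,\Delta]^{\pm1}$), and along an arbitrary positive expression for the $\SWin$ assertion, starting from $\Win\equiv 0$ at $\beta=1$, yields $\LWin(\beta)\le\ell_{\sf classical}(\beta)$ for all $\beta$, and $\SWin(\beta)\ge 0$ whenever $\beta$ is classical positive.

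The second fact, which I expect to be the main obstacle, is a peeling lemma: if $\SWin(\beta)\ge 0$ and $k:=\LWin(\beta)\ge 1$, then there is $x\in[1,\Delta]$ with $\SWin(\beta x^{-1})\ge 0$ and $\LWin(\beta x^{-1})\le k-1$. Geometrically, one must isolate the ``outermost layer'' of $\overline{D_{\beta}}$ --- the union of sub-arcs carrying the maximal label $k$ --- together with the cyclic record of which punctures that layer encircles, and then read off from this data an $r$-symmetric dance of punctures $x$ (built, if necessary, in the form $\Delta y^{-1}$ or $y^{-1}\Delta$ for an auxiliary simple element $y$, with membership in $[1,\Delta]$ certified by Lemma~\ref{lemma:keyproperty}) whose inverse $x^{-1}$ ``unwraps'' precisely this layer: it drops every label equal to $k$ to at most $k-1$ while, because $\SWin(\beta)\ge 0$, producing no negative label and, after an isotopy, restoring conditions (i)--(iv) of Definition~\ref{defn:curvediagram}. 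The delicate part is that a single simple element must undo the whole top layer at once and $r$-symmetrically; I would establish this by an induction (on the number of strands, or on a suitable complexity of the top layer) that removes the layer strip by strip in $r$-symmetric pairs.

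Granting these two facts, the proposition follows quickly. Applying the peeling lemma repeatedly (at most $\LWin(\beta)$ times) to a $\beta$ with $\SWin(\beta)\ge 0$ produces an element with $\SWin\ge 0$ and $\LWin\le 0$, hence with $\SWin=\LWin=0$ (as $\SWin\le\LWin$ always), hence equal to $1$ by the recalled equivalence $\beta=1\iff\SWin(\beta)=\LWin(\beta)=0$; this writes $\beta$ as a product of at most $\LWin(\beta)$ elements of $[1,\Delta]$, so $\beta$ is classical positive and $\ell_{\sf classical}(\beta)\le\LWin(\beta)$. Combined with $\LWin(\beta)\le\ell_{\sf classical}(\beta)$ and, when $\beta$ is classical positive, $\SWin(\beta)\ge 0$ from the first step, we obtain, for every classical positive $\beta$, $\LWin(\beta)=\ell_{\sf classical}(\beta)$ and $\SWin(\beta)\ge 0$ (and, as a byproduct, the equivalence of ``classical positive'' with ``$\SWin\ge 0$''). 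The classical negative case then follows by applying the classical positive case to $\iota_{*}(\beta)$, where $\iota$ is the orientation-reversing involution $z\mapsto\bar z$ of $D_{2n}$: it fixes $\overline{E}$ pointwise, negates every winding number labelling, and induces an automorphism $\iota_{*}$ of $A(B_{n})$ that interchanges $[1,\Delta]$ with $[1,\Delta]^{-1}$, hence sends a classical negative $\beta$ to a classical positive element of the same classical Garside length; translating back gives $\SWin(\beta)=-\ell_{\sf classical}(\beta)$ and $\LWin(\beta)\le 0$.
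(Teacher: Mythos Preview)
Your overall architecture matches the paper's: an easy local estimate giving $\LWin(\beta)\le\ell_{\sf classical}(\beta)$ and $\SWin\ge 0$ for positive elements, combined with a ``peeling lemma'' that strips off one simple element at a time, reducing $\LWin$ by one while keeping $\SWin\ge 0$. The paper isolates the peeling step as Lemma~\ref{lemma:win} and proves it concretely: from the $V$-arcs of the completed diagram $\mD_\beta$ one defines an $r$-antisymmetric partial order $\prec$ on the punctures (roughly, $z\prec z'$ when a $V$-arc separates them with $z'$ above), extends it to a total order, and reads off the imaginary parts in \textbf{Step~1} of the inverse dance directly. This is more explicit than your ``strip by strip'' induction and avoids any appeal to Lemma~\ref{lemma:keyproperty} in the construction of $x$.

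There is, however, a genuine side-of-multiplication issue in your peeling step. You set everything up on the right, via $\overline{D_{\alpha x}}=\phi_\alpha(\overline{D_x})$, and state the peeling lemma for $\beta x^{-1}$; but the mechanism you describe---reading the outermost layer of $D_\beta$ and choosing a dance $x^{-1}$ that ``unwraps'' it---is an operation on the curve diagram itself, i.e.\ $D_\beta\mapsto\phi_{x^{-1}}(D_\beta)=D_{x^{-1}\beta}$, which is \emph{left} multiplication. With right multiplication one has $D_{\beta x^{-1}}=\phi_\beta(D_{x^{-1}})$, and determining the correct $x$ from $D_\beta$ alone would require knowing $\phi_\beta$ on arcs other than $\overline{E}$. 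The fix is simply to run the peeling on the left, exactly as in Lemma~\ref{lemma:win}; your upper-bound argument can stay on the right, or be rephrased on the left as the paper does ($\LWin(x\beta)\le\LWin(\beta)+1$, $\LWin(x^{-1}\beta)\le\LWin(\beta)$). Your involution trick $z\mapsto\bar z$ for the negative case is correct and a pleasant addition; the paper just says ``similarly''.
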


\begin{proof}[Proof of Theorem \ref{theorem:lengthformula_c}, assuming Proposition \ref{proposition:key}]

For $\beta \in A(B_{n})$, let us take a geodesic representative of $\beta$ with respect to $\ell_{\sf classical}$, $\beta=x_{1}^{\varepsilon_{1}}\cdots x_{\ell}^{\varepsilon_\ell}$ $(x_{i} \in [1,\Delta], \varepsilon_{i} \in \{\pm 1\})$.
Let $\ell_{\sf p}$ and $\ell_{\sf n}$ be the number of $i$ such that $\varepsilon_{i}=+1$ and $\varepsilon_{i}=-1$, respectively. If either $\ell_{\sf p}$ or $\ell_{\sf n}$ is zero, then $\beta$ is either classical positive or classical negative so we are done by Proposition \ref{proposition:key}.
Thus we assume neither $\ell_{\sf p}$ nor $\ell_{\sf n}$ is zero.

By Lemma \ref{lemma:keyproperty}, we may rewrite the geodesic word as 
$\beta = z_{\ell}z_{\ell-1}\cdots z_{1}\Delta^{-\ell_{\sf n}}$, $z_{i} \in [1,\Delta]$. As an element of mapping class group, $\Delta$ is a half rotation of the disc $D_{2n}$, hence $\LWin(\beta\Delta^{\ell_{\sf n}}) = \LWin(\beta)+\ell_{\sf n}$. On the other hand, the braid $\beta\Delta^{\ell_{\sf n}}=z_{\ell}z_{\ell-1}\cdots z_{1}$ is classical positive, hence by Proposition \ref{proposition:key}, $\LWin(\beta\Delta^{\ell_{\sf n}})=\ell_{\sf classical}(z_{\ell}z_{\ell-1}\cdots z_{1}) \leq \ell$. If $\ell' = \ell_{\sf classical}(z_{\ell}z_{\ell-1}\cdots z_{1}) < \ell$, then we may write $\beta = z'_{\ell'}\cdots z'_{1}\Delta^{-\ell_{\sf n}}$. By using Lemma  \ref{lemma:keyproperty}, we obtain a shorter word representative of $\beta$ which is impossible since $\ell=\ell_{\sf classical}(\beta)$. This shows $\LWin(\beta\Delta^{\ell_{\sf n}})=\ell$ hence 
\begin{equation} 
\label{eqn:LWin}
\LWin(\beta) = \ell - \ell_{\sf n}. 
\end{equation}

Similarly, by Lemma \ref{lemma:keyproperty}, we may rewrite the geodesic word as $\beta = z'_{\ell}{}^{-1}\cdots z'_{1}{}^{-1}\Delta^{-\ell_{\sf n} +\ell}$, and $\SWin(\beta\Delta^{\ell_{\sf n} - \ell}) = \SWin(\beta)+\ell_{\sf n}-\ell$ holds.
By Proposition \ref{proposition:key}, $\SWin(\beta\Delta^{\ell_{\sf n} -\ell})=-\ell_{\sf classical}(z'_{\ell}{}^{-1}\cdots z'_{1}{}^{-1}) = -\ell$ hence we conclude 
\begin{equation} 
\label{eqn:SWin}
\SWin(\beta) = -\ell_{\sf n}. 
\end{equation}
(\ref{eqn:LWin}) and (\ref{eqn:SWin}) prove Theorem \ref{theorem:lengthformula_c}.
\end{proof}

It remains to show Proposition \ref{proposition:key}.
The following lemma shows that we have an effective untangling procedure of the curve diagram.

\begin{lemma}
\label{lemma:win}
Let $\beta \in A(B_{n})$ be a non-trivial element such that $\SWin(\beta) \geq 0$. Then there exists a classical simple element $x$ such that 
\begin{enumerate}
\item $\LWin(x^{-1}\beta)=\LWin(\beta)-1$.
\item $\SWin(x^{-1}\beta) \geq 0$. 
\end{enumerate}
\end{lemma}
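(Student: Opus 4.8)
\textbf{Proof plan for Lemma \ref{lemma:win}.}

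The plan is to mimic the geometric untangling argument from the braid case \cite{w}, adapted to the $r$-symmetric setting. Assume $\beta$ is non-trivial with $\SWin(\beta)\geq 0$, and let $m = \LWin(\beta)$. Since $\beta\neq 1$, by the preceding Lemma we have $m\geq 1$. The idea is to look at the part of the curve diagram $D_{\beta}$ (or rather the completed diagram $\mD_{\beta}$, to keep track of $r$-symmetry) where the winding number labeling attains its maximum $m$, and to show that the ``outermost'' layer of the curve diagram — the set of regular points labeled $m$ — can be absorbed by premultiplying $\beta$ by a single classical simple element $x$, i.e. by performing the inverse of the $r$-symmetric dance of punctures described in Definition \ref{defn:csimple}. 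Because $\SWin(\beta)\geq 0$ and $m=\LWin(\beta)\geq 1$, the winding labels of $D_{\beta}$ all lie in the range $\{0,1,\ldots,m\}$, so this outermost layer is genuinely separated from the rest.

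First I would analyze the local structure of the curve diagram around regions where $\Win = m$. Between two consecutive vertical tangencies the winding label is locally constant, and it changes by $\pm 1$ across a vertical tangency depending on whether the curve is locally convex or concave there; since $m$ is maximal, every vertical tangency bounding a $\Win=m$ arc must be ``bending the curve back'', i.e. of a fixed type. From this I would extract a description of how the diagram behaves near each puncture: the arcs of $D_{\beta}$ emanating from (the modified neighborhoods of) the punctures that carry the label $m$ form, after the modification of Figure \ref{fig:modification}, a configuration that up to isotopy looks like the standard picture just \emph{before} applying a classical simple element. The key point is to use the $r$-symmetry \eqref{eqn:symmetry} together with Definition \ref{defn:curvediagram}(iv) to guarantee that the candidate simplifying move is itself $r$-symmetric, hence really is (the inverse of) an element $x\in[1,\Delta]$ in the sense of Definition \ref{defn:csimple}, rather than just an element of $\mB_{2n}$.

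Next I would verify the two numerical claims. For (1): applying $x^{-1}$ realizes the inverse dance, which straightens out precisely the outermost $\Win=m$ layer; near each puncture the rounding function $R$ applied to the direction lift drops by exactly $1$ on the affected arcs and is unchanged elsewhere, so $\LWin(x^{-1}\beta) = m-1$. I must also check that no \emph{new} larger winding label is created, which follows because the move only unwinds and the rest of the diagram (labels $\leq m-1$) is untouched up to isotopy away from $B$. For (2): the move does not decrease any winding label — it only affects arcs labeled $m$, turning them into arcs labeled $m-1\geq 0$ — so $\SWin(x^{-1}\beta)\geq \min\{\SWin(\beta), m-1\}\geq 0$.

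The main obstacle I anticipate is the combinatorial case analysis in the first step: showing that the maximal-winding layer always has exactly the standard form that a classical simple element can undo, and in particular that the required move respects the $r$-symmetry and the normalization conditions (i)--(iv) of Definition \ref{defn:curvediagram}. In the braid case this is where the bulk of the work in \cite{w} lies, and here the presence of the half-rotation $r$ and the special treatment of $E_n$ (which satisfies $r(\phi(E_n))=\phi(E_n)$) adds extra cases to check near the puncture pair $p_1,q_1$ on the segment $E_n$. I expect that working on the completed diagram $\mD_\beta$ and using \eqref{eqn:symmetry} systematically reduces this to essentially the braid-group analysis applied $r$-equivariantly.
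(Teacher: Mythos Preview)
Your plan is the same as the paper's (and Wiest's): build the inverse of a classical simple element as the three-step reverse dance of Definition~\ref{defn:csimple}, choose the vertical positions of the punctures from the geometry of the maximal-label portion of $\mD_\beta$, and use the $r$-symmetry~\eqref{eqn:symmetry} to ensure the resulting move lies in $A(B_n)$ rather than merely in $\mB_{2n}$.

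Two points where your sketch drifts from what actually happens. First, the simple element $x$ is \emph{not} determined by the $\Win=m$ layer alone: one needs to assign an imaginary part to \emph{every} puncture in Step~1. The paper does this by cutting $\mD_\beta$ into $V$-arcs (connected components of $\mD_\beta$ minus the vertical tangencies), and declaring $z\prec z'$ whenever some $V$-arc separates them with $z$ below and $z'$ above; this partial order is then extended to an $r$-antisymmetric total order $\widetilde\prec$, which fixes the vertical move and hence $x$. Your phrase ``the arcs \ldots\ form a configuration that up to isotopy looks like the standard picture just before applying a classical simple element'' is pointing at this, but the actual mechanism is the ordering $\prec$ on \emph{all} punctures, not just a local picture near the maximal arcs.

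Second, your justification of (2) is not right as stated. Applying $x^{-1}$ is a global diffeomorphism of $D_{2n}$; it does not ``only affect arcs labeled $m$''. What is true---and what the paper argues---is that with the ordering $\prec$ chosen as above, the vertical moves in Step~1 leave the labels of the $V$-arcs carrying $\SWin(\beta)$ unchanged, so $\SWin(x^{-1}\beta)=\SWin(\beta)\geq 0$ (with the borderline case $x=\Delta$ occurring only when $\LWin(\beta)=\SWin(\beta)$). Your inequality $\SWin(x^{-1}\beta)\geq\min\{\SWin(\beta),m-1\}$ happens to give the right conclusion, but the reasoning behind it should be replaced by this compatibility of $\prec$ with the minimal-label arcs.
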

\begin{proof}
First we express an action of the inverse of classical simple elements as a three-step move of punctures that is a converse of the action given in Definition \ref{defn:csimple}.
\begin{description}
\item[Step 1] Move punctures vertically so that 
\[ \{\im(p_1),\ldots, \im(p_n), \im(q_{1}), \ldots, \im(q_{n})\} =\{-n,\ldots, -1,1,\ldots,n\} \]
and $r(p_{i})=q_{i}$ for all $i=1,\ldots,n$
\item[Step 2] Move punctures horizontally so that all punctures lie on the imaginary axis.
\item[Step 3] Perform an counter-clockwise rotation of angle $\pi\slash 2$ so that all puncture points lie on the real axis.
\end{description}

Here in the {\bf Step 1}, we need to determine the imaginary part of punctures. 
From the (completed) curve diagram $\mD_{\beta}$ we define the partial ordering $\prec$ on the set of puncture points in the following manner.

Consider the connected components of $\mD_{\beta}- \{\textrm{vertical tangencies}\}$. We will call such arcs {\em $V$-arcs}. Each $V$-arc may contain more than one puncture points, and the winding number labelings take a constant value on each $V$-arc. Roughly speaking, we define $z \prec z'$ if there exists a $V$-arc $\alpha$ such that $z'$ lies above $\alpha$ and $z$ lies below $\alpha$. 

To define $\prec$ precisely, observe that there are two types of $V$-arc $\alpha$: The first case is that the winding number labeling $\Win$ takes a local maxima or local minima on $\alpha$, in other words, at the endpoints of $\alpha$ the direction of winding is different. For such $V$-arc $\alpha$, we move punctures vertically and isotope the diagram accordingly so that the resulting $V$-arc does not contain horizontal tangencies (See Figure \ref{fig:movep} (A)).

The second case is that the winding number labeling $\Win$ does not take a local maxima or local minima on $\alpha$, equivalently saying, at the endpoints of $\alpha$ the direction of winding is the same. For such $V$-arc $\alpha$, we move punctures vertically and isotope the diagram accordingly so that the resulting $V$-arc is horizontal except near vertical tangency. (See Figure \ref{fig:movep} (B)).

After these moves, by comparing the imaginary part we get a partial ordering $\prec$ (c.f. \cite[Sublemma 2.3]{w}). Since $\mD_{\beta}$ is $r$-symmetric, we can perform the move of punctures so that it is $r$-symmetric. In particular, the resulting partial ordering $\prec$ can be chosen so that it is $r$-antisymmetric: $z \prec z'$ implies $r(z) \succ r(z')$. Let $\widetilde{\prec}$ be an $r$-antisymmetric total ordering on the the set of punctures that extends $\prec$. Then $\widetilde{\prec}$ determines the imaginary part of the punctures in {\bf Step 1}. The $r$-antisymmetry of $\widetilde{\prec}$ implies that the move of punctures described in {\bf Step 1} is $r$-symmetric in the sense $r(p_{i})=q_{i}$.

The moves in {\bf Steps 1--3} defines the inverse of a classical simple element $x$. From the definition of $\prec$, the vertical moves of punctures in {\bf Step 1} removes the $V$-arcs with labeling $\LWin(\beta)$. Hence $\LWin(\beta)$ decreases by one after performing $x^{-1}$, so $\LWin(x^{-1}\beta)=\LWin(\beta)-1$.
Similarly, the vertical moves of punctures in {\bf Step 1} does not affect the labelling of the $V$-arcs with labeling $\SWin(\beta)$. (See \cite{w} for more detailed explanation)

This shows $\SWin(x^{-1}\beta)=\SWin(\beta)\geq 0$. The case $x=\Delta$ happens only if $\LWin(\beta)=\SWin(\beta)$, so in this case $\SWin(x^{-1}\beta)\geq 0$ is also satisfied. 

\begin{figure}[htbp]
\centerline{\includegraphics[width=90mm]{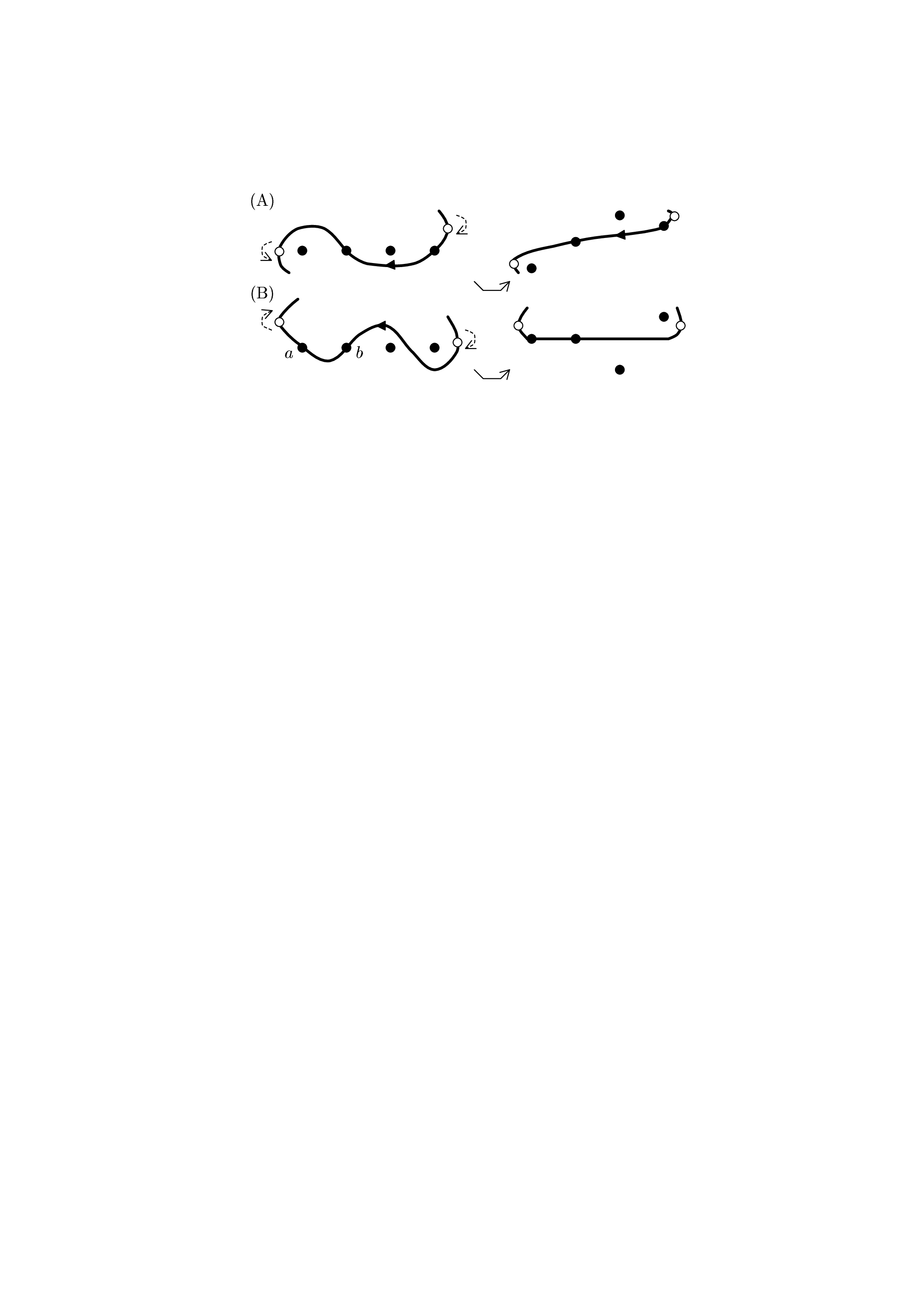}}
   \caption{How to determine the imaginary part of punctures. (A) illustrates the case that at the endpoints (the vertical tangencies, denoted by white circles), the direction of winding (indicated by dotted arrow) disagrees. (B) illustrates the case that the direction of winding agrees. In such case, between the puncture points $a$ and $b$ lying on the $V$-arc the partial ordering $\prec$ is not defined.}
\label{fig:movep}
\end{figure}
\end{proof}

\begin{proof}[Proof of Proposition \ref{proposition:key}]

The action of classical simple elements of $A(B_{n})$, as given in Definition \ref{defn:csimple}, shows that a classical simple element $x$ acts on $D_{n}$ locally as clockwise rotations so never decreases the winding number labelings. 
Hence $\SWin(\beta) \geq 0$ for a classical positive braid $\beta$.
Moreover, $x$ add windings to each $V$-arcs at most by onem hence $\LWin(x \beta) \leq \LWin(\beta)+1$ and $\LWin(x^{-1}\beta) \leq \LWin(\beta)$ for all $\beta \in A(B_{n})$ and $x \in [1,\Delta]$.
(see \cite{w} for more detailed explanation). 
In particular, we have an inequality $ \LWin(\beta) \leq \ell_{\sf classical}(\beta)$
for any (not necessarily classical positive) $\beta \in A(B_{n})$.

If $\beta$ is classical positive then $\SWin(\beta)\geq 0$, so Lemma \ref{lemma:win} shows a classical positive $\beta \in A(B_{n})$ can be written as a product of $\LWin(\beta)$ classical positive elements. So we get the converse inequality $\ell_{\sf classical}(\beta) \leq \LWin(\beta)$.
So we conclude $\ell_{\sf classical}(\beta) = \LWin(\beta)$.
The assertion for $\SWin(\beta)$ is proved in a similar manner.
\end{proof}

\subsection{Dual Garside length and wall-crossing labeling}

In a similar manner, we prove the length formula for the dual Garside length.
To define dual simple elements, we isotope the punctures, walls, and curves $\overline{E}$ so that all punctures lie on the circle $|z|=n$, preserving the property that $r(W_{i}) = W_{n+i}$ (see the left hand side of the Figure \ref{fig:curveb_dual}). Since the wall-crossing labeling is defined in terms of the algebraic intersection numbers, this isotopy does not affect the wall crossing labeling.

\begin{figure}[htbp]
\centerline{\includegraphics[width=110mm]{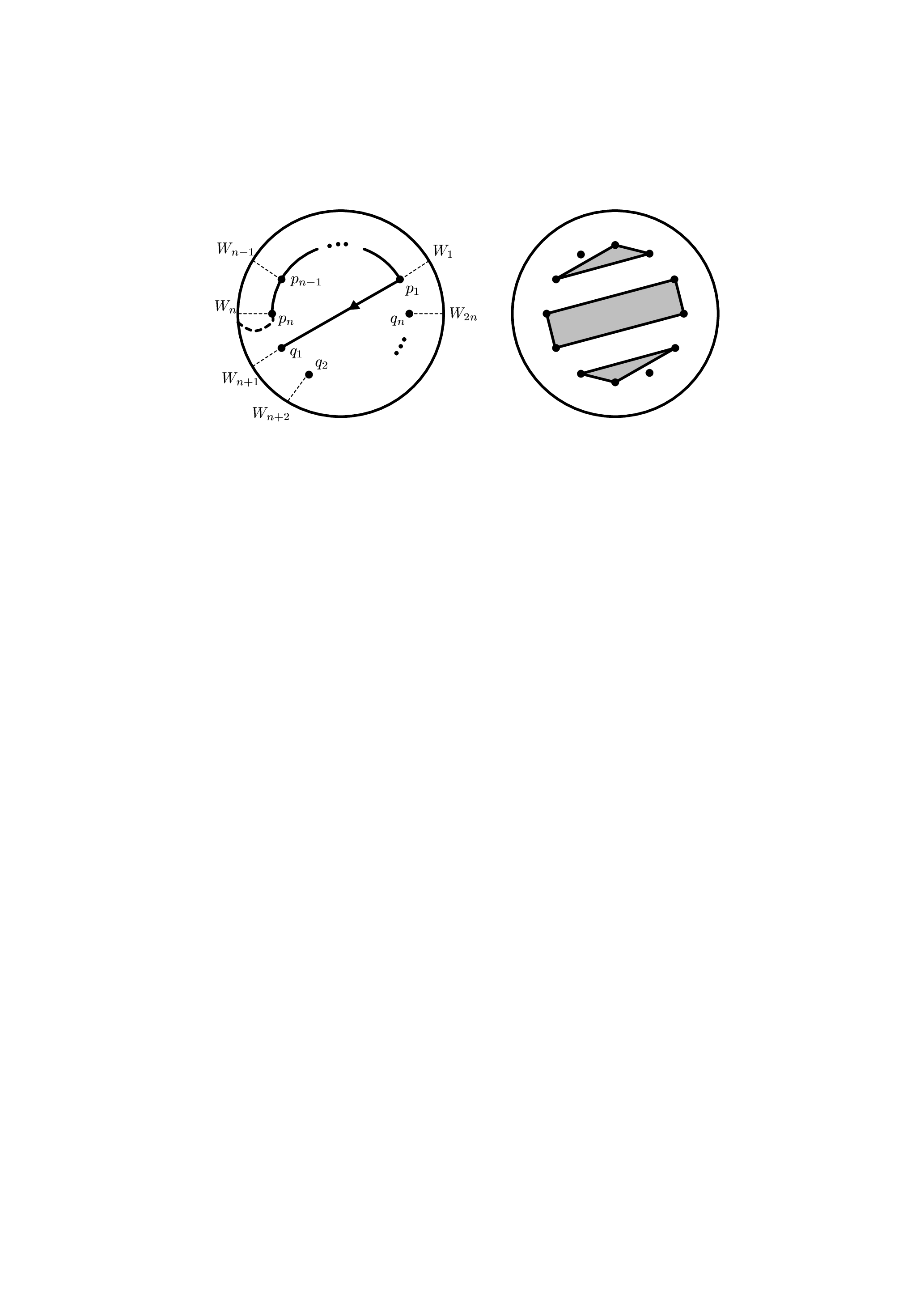}}
   \caption{(Left) Isotoping walls and curve diagram to treat dual simple elements. (Right) $r$-symmetric convex polygons.}
 \label{fig:curveb_dual}
\end{figure}

Take a collection of convex polygons $Q$ in $D_{2n}$ whose vertices are puncture points. We say $Q$ is {\em $r$-symmetric} if $r(Q)=Q$ (see the right of the Figure \ref{fig:curveb_dual}, for example).

For an $r$-symmetric collection of convex polygons $Q$, we define $y_{Q}\in A(B_{n})$ as follows. For each connected component $Q'$ of $Q$, we associate a move of puncture points that corresponds to the clockwise rotation of $Q'$. Namely, each puncture on $Q'$ moves to the adjacent punctures of $Q'$ in the clockwise direction along the boundary of $Q$ (see Figure \ref{fig:dualsimple}).
If $Q'$ is degenerate, namely, $Q'$ is a line segment $e$ connecting two punctures, the resulting move is nothing but the half Dehn twist along $e$ which we described in Figure \ref{fig:halfDehn}. 
This move of puntures defines an element $y_{Q'} \in \mathcal{B}_{2n}$.
We define
\[ y_{Q}= \prod_{Q'} y_{Q'} \]
where $Q'$ runs all connected components of $Q$.
Since $Q$ is $r$-symmetric, $y_{Q} \in A(B_{n})$.

\begin{figure}[htbp]
\centerline{\includegraphics[width=90mm]{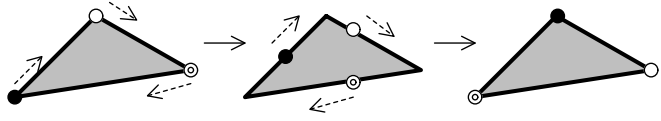}}
   \caption{The action of dual simple elements}
 \label{fig:dualsimple}
\end{figure}

\begin{definition}
\label{defn:dsimple}
An element $y \in A(B_{n})$ is called a \emph{dual simple element} if $y=y_{Q}$ for some $r$-symmetric collection of convex polygons $Q$. 
The dual Garside element $\delta$ is a dual simple element that corresponds to the connected convex polygon $Q$ having all the punctures as its vertices. 
We denote the set of dual simple elements by $[1,\delta]$. 
\end{definition}

As an element of mapping class group, $\delta$ is nothing but the rotation of $D_{2n}$ by $\frac{\pi}{n}$. Since the standard generators $s_{i}$ are dual simple elements, $[1,\delta]$ generates $A(B_{n})$. For $\beta \in A(B_n)$, the {\em dual Garside length} $\ell_{\sf dual}(\beta)$ is the length of $\beta$ with respect to the dual simple elements $[1,\delta]$.

Now we are ready to state the second main theorem, which generalizes the corresponding theorem for curve diagrams of the braid groups \cite{iw}.

\begin{theorem}
\label{theorem:lengthformula_dual}
For $\beta \in A(B_{n})$, $\ell_{\sf dual}(\beta) = \max\{ \LWcr(\beta),0\} - \min \{ \SWcr(\beta),0 \}$.
\end{theorem}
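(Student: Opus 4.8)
\textbf{Proof proposal for Theorem \ref{theorem:lengthformula_dual}.}
The plan is to run an argument that is structurally parallel to the proof of Theorem \ref{theorem:lengthformula_c}, with the winding number labeling replaced by the wall-crossing labeling and the classical simple elements replaced by the dual simple elements. First I would isolate the analogue of Lemma \ref{lemma:keyproperty}, namely Lemma \ref{lemma:keyproperty_d} (already promised in the introduction): if $y \in [1,\delta]$ then both $\delta y^{-1}$ and $y^{-1}\delta$ lie in $[1,\delta]$. Geometrically this should follow from the description of dual simple elements via $r$-symmetric collections of convex polygons: the full polygon $Q_\delta$ with all $2n$ punctures as vertices realizes $\delta$, and the ``complement'' of a subpolygon configuration inside the rotation $\delta$ is again a union of $r$-symmetric convex polygons (this is the type-$B$ avatar of the noncrossing-partition lattice anti-automorphism $y \mapsto \delta y^{-1}$). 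Granting this, the deduction of the theorem from the dual analogue of Proposition \ref{proposition:key} is formally identical to the classical case: take a $\ell_{\sf dual}$-geodesic $\beta = y_1^{\varepsilon_1}\cdots y_\ell^{\varepsilon_\ell}$, split into the positive and negative counts $\ell_{\sf p},\ell_{\sf n}$, use Lemma \ref{lemma:keyproperty_d} to push all negative letters to the right as a power $\delta^{-\ell_{\sf n}}$, observe that $\delta$ acts as a rigid rotation of $D_{2n}$ so that $\Wcr(\beta\delta^{\ell_{\sf n}}) = \Wcr(\beta) + \ell_{\sf n}$ uniformly on every regular point (each application of $\delta$ pushes the curve diagram across exactly one more wall), and conclude $\LWcr(\beta) = \ell - \ell_{\sf n}$, $\SWcr(\beta) = -\ell_{\sf n}$, which is the claimed formula.

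It therefore remains to prove the dual analogue of Proposition \ref{proposition:key}: if $\beta$ is dual positive then $\LWcr(\beta) = \ell_{\sf dual}(\beta)$ and $\SWcr(\beta) \geq 0$, and symmetrically for dual negative $\beta$. One direction is the easy monotonicity estimate. A dual simple element $y_Q$ moves each puncture to an adjacent vertex of its polygon along an embedded arc that can be taken to cross the wall system $W$ at most once per puncture in a controlled (clockwise) way; pushing the curve diagram across this move increases the algebraic wall-crossing count along any arc segment by at most one and never decreases it. This yields $\SWcr(\beta) \geq 0$ for dual positive $\beta$ and the inequality $\LWcr(\beta) \leq \ell_{\sf dual}(\beta)$ for arbitrary $\beta$. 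For the reverse inequality I need the dual analogue of Lemma \ref{lemma:win}: if $\beta \neq 1$ and $\SWcr(\beta) \geq 0$, then there is a dual simple element $y$ with $\LWcr(y^{-1}\beta) = \LWcr(\beta) - 1$ and $\SWcr(y^{-1}\beta) \geq 0$. This is the geometric heart of the proof. The construction should mirror the classical untangling: look at the outermost portion of the curve diagram carrying the maximal wall-crossing label $\LWcr(\beta)$, read off from it which punctures must be rotated and in what cyclic blocks, assemble these blocks into an $r$-symmetric collection of convex polygons $Q$ (using $r$-antisymmetry of the natural partial order on punctures to keep the configuration $r$-invariant, exactly as $\widetilde{\prec}$ is used in the proof of Lemma \ref{lemma:win}), and check that $y_Q^{-1}$ strips off precisely one unit of wall-crossing from the top stratum while leaving the bottom stratum untouched. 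Iterating writes a dual positive $\beta$ as a product of $\LWcr(\beta)$ dual simple elements, giving $\ell_{\sf dual}(\beta) \leq \LWcr(\beta)$.

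I expect the main obstacle to be the construction in the dual version of Lemma \ref{lemma:win} --- specifically, verifying that the punctures singled out by the top wall-crossing stratum of $\mD_\beta$ genuinely organize into a \emph{convex} (and $r$-symmetric) polygonal configuration whose associated rotation $y_Q^{-1}$ decrements $\LWcr$ by exactly one without creating new crossings elsewhere or disturbing $\SWcr$. In the braid case \cite{iw} this is handled by a careful analysis of the arcs between consecutive wall crossings; here one has the extra bookkeeping of the half-rotation symmetry $r$ and the fact that walls come in the $r$-paired families $W_i, W_{i+n}$, so the polygon touching the central wall pair (around $E_n$) requires separate attention, just as the case $x = \Delta$ did in Lemma \ref{lemma:win}. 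Once that lemma is in place, everything else is a transcription of the classical argument with $\Win \rightsquigarrow \Wcr$, $[1,\Delta] \rightsquigarrow [1,\delta]$, $\Delta \rightsquigarrow \delta$.
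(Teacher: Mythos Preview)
Your proposal is correct and follows essentially the same route as the paper: reduce to a dual analogue of Proposition~\ref{proposition:key} via Lemma~\ref{lemma:keyproperty_d} and the rotation action of $\delta$, then prove that proposition by combining the easy monotonicity bound with a dual untangling lemma. The one place you overcomplicate matters is the construction of the simple element in that untangling step: the paper does not use any partial-order argument analogous to $\widetilde{\prec}$, but simply takes the set $\mathcal{A}$ of arcs of $D_\beta - W$ carrying the top label $\LWcr(\beta)$, draws for each $a\in\mathcal{A}$ the straight segment $e(a)$ between the two punctures at the feet of the walls $a$ meets, and lets $Q$ be the convex hull of $\bigcup_{a} e(a)$; $r$-symmetry of $Q$ is then automatic from the $r$-symmetry of the curve diagram, so no extra bookkeeping or separate ``central wall pair'' case is needed.
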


The proof of Theorem \ref{theorem:lengthformula_dual} is similar to the proof of Theorem \ref{theorem:lengthformula_c}.

First observe that from the definition, the dual simple elements also have the same property as the classical simple elements.

\begin{lemma}
\label{lemma:keyproperty_d}
If $y \in [1,\delta]$, then both $\delta y^{-1}$ and $y^{-1}\delta$ lies in $[1,\delta]$.
\end{lemma}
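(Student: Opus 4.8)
The plan is to prove this in exactly the same spirit as the analogous statement for classical simple elements in Lemma~\ref{lemma:keyproperty}, namely by unwinding the geometric definition of dual simple elements and that of $\delta$ given in Definition~\ref{defn:dsimple}. The key observation is that $\delta$ acts on $D_{2n}$ as the rigid rotation by $\pi/n$, and this rotation is exactly a ``one-step'' global rotation of the regular $2n$-gon on all punctures. So the heart of the argument is: if $y=y_Q$ is the clockwise rotation along an $r$-symmetric collection of convex polygons $Q$, then $\delta y^{-1}$ and $y^{-1}\delta$ should again be of the form $y_{Q'}$ for some $r$-symmetric collection of convex polygons $Q'$, obtained from $Q$ and the ``full $2n$-gon'' by a combinatorial operation.

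First I would set up notation: think of the $2n$ punctures as the vertices of a regular $2n$-gon $P$ (after the isotopy placing all punctures on $|z|=n$, as in Figure~\ref{fig:curveb_dual}), with $\delta$ the clockwise elementary rotation of $P$ by $2\pi/(2n)=\pi/n$. A dual simple element $y_Q$ is determined by the data of a noncrossing partition-type object: an $r$-symmetric family of pairwise ``parallel-or-nested'' convex polygons inscribed in $P$. Concretely, $y_Q$ sends each puncture $z$ on a component $Q'$ of $Q$ to the next vertex of $Q'$ clockwise, and fixes every puncture not lying on any $Q'$. The first step is to describe $\delta y_Q^{-1}$ purely as a permutation of the punctures together with the required ``rotation along convex polygons'' structure: $\delta y_Q^{-1}$ first undoes the rotations along the polygons of $Q$ and then applies the global clockwise one-step rotation of the full $2n$-gon. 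I would identify the orbit structure of this composite permutation and check that each orbit is again realized as the clockwise rotation along a convex polygon inscribed in $P$, with these polygons forming a noncrossing (hence convex, non-overlapping) family; $r$-symmetry is automatic since both $\delta$ and $y_Q$ are $r$-symmetric, so $\delta y_Q^{-1}$ is $r$-symmetric and its defining polygon family is $r$-invariant. The argument for $y_Q^{-1}\delta$ is the mirror of this, composing in the other order, and produces an $r$-symmetric convex-polygon family as well.

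The step I expect to be the main obstacle is verifying that the polygons obtained this way are genuinely \emph{convex and non-crossing} — i.e.\ that the resulting mapping class really is a dual simple element and not merely some $r$-symmetric product of Dehn-twist-like moves. This is the lattice-free analogue of the statement that simple elements of the dual Garside structure form an interval closed under left/right complementation, and in the braid case it is a standard noncrossing-partition fact (the complement of a noncrossing partition in the cyclic lattice is noncrossing). Here the extra wrinkle is the $r$-symmetry constraint, which restricts $Q$ to the $B_n$-type noncrossing partitions; I would handle this by checking that the complementation operation on noncrossing partitions of the $2n$-gon commutes with the $\Z/2$ rotation action $r$, so it restricts to the $r$-symmetric ones. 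Once the noncrossing/convexity check is in place, the identification of $\delta y_Q^{-1}$ and $y_Q^{-1}\delta$ as $y_{Q'}$ for the complementary family $Q'$ is routine, and $r$-symmetry of $Q'$ gives the conclusion that both lie in $[1,\delta]$.
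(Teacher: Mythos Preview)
Your proposal is correct and is precisely the kind of verification the paper has in mind: the paper gives no explicit proof of this lemma at all, merely stating that one should ``observe that from the definition'' the dual simple elements satisfy the same complementation property as the classical ones. Your outline --- identifying $\delta y_Q^{-1}$ and $y_Q^{-1}\delta$ with the Kreweras-type complement of the noncrossing partition $Q$ inside the full $2n$-gon, and checking that this complementation commutes with the involution $r$ --- is exactly how one makes that observation rigorous, and the convexity/non-crossing check you flag as the main point is indeed the only place where any real content lies.
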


Recall that we say $\beta \in A(B_{n})$ is \emph{dual positive} (resp. \emph{dual negative}) if $\beta$ is written as a product of positive (resp. negative) dual simple elements. 
By the same argument as the proof of Theorem \ref{theorem:lengthformula_c},
the following Proposition \ref{proposition:key_dual} and Lemma \ref{lemma:keyproperty_d} proves Theorem \ref{theorem:lengthformula_dual}.

\begin{proposition}
\label{proposition:key_dual}
If $\beta \in A(B_{n})$ is dual positive, then $\LWcr(\beta) = \ell_{\sf dual}(\beta)$ and $\SWcr(\beta) \geq 0$.
Similarly, if $\beta \in A(B_{n})$ is dual negative, then $\SWcr(\beta) = -\ell_{\sf dual}(\beta)$ and $\LWcr(\beta) \leq 0$.
\end{proposition}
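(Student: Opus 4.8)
The plan is to transcribe the proof of Proposition \ref{proposition:key}, replacing the winding number labeling by the wall-crossing labeling, the classical simple elements by the dual simple elements, $\Delta$ by $\delta$, and the $V$-arcs by their wall-crossing analogue, the \emph{$W$-arcs} --- the connected components of $\mD_{\beta}$ obtained by removing the intersection points with the walls $W$, on each of which $\Wcr$ takes a constant value. As in the classical case the argument rests on three ingredients. First, working in the isotoped picture where the punctures lie on $|z|=n$ and the walls are radial, the explicit description of $y_{Q}$ in Definition \ref{defn:dsimple} (a clockwise rotation of each convex component $Q'$, sending every puncture to the next vertex of $\partial Q'$) shows, exactly as for braids in \cite{iw}, that applying a dual simple element acts near each puncture as a clockwise rotation and so never decreases $\Wcr$, and raises the maximal label by at most one. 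This gives $\SWcr(\beta)\geq 0$ whenever $\beta$ is dual positive, and $\LWcr(y\beta)\leq \LWcr(\beta)+1$, $\LWcr(y^{-1}\beta)\leq \LWcr(\beta)$ for every $\beta\in A(B_{n})$ and $y\in[1,\delta]$; in particular $\LWcr(\beta)\leq \ell_{\sf dual}(\beta)$ for all $\beta$.

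Second, I would establish the dual analogue of Lemma \ref{lemma:win}: if $\beta\in A(B_{n})$ is non-trivial with $\SWcr(\beta)\geq 0$, then there is a dual simple element $y$ with $\LWcr(y^{-1}\beta)=\LWcr(\beta)-1$ and $\SWcr(y^{-1}\beta)\geq 0$. Here the role played in the classical proof by the imaginary parts of the punctures is taken over by the radial arrangement of the walls: the $W$-arcs carrying the maximal label $\LWcr(\beta)$ form the outermost layer with respect to wall-crossings, and one wants to choose $Q$ so that the counterclockwise rotation $y_{Q}^{-1}$ strips off precisely this layer while leaving the $W$-arcs labelled $\SWcr(\beta)$ unchanged. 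The punctures enclosed by the outermost layer determine the vertex set of $Q$; because $\mD_{\beta}$ is $r$-symmetric and the labelings satisfy \eqref{eqn:symmetry}, this vertex set is $r$-invariant, so $Q$ can be taken $r$-symmetric and hence $y_{Q}\in A(B_{n})$. That the enclosed region decomposes into convex pieces and that $y_{Q}^{-1}$ removes exactly one layer are checked from the planar picture as in \cite{iw}; the case $y=\delta$ arises only when $\LWcr(\beta)=\SWcr(\beta)$, where $\SWcr(y^{-1}\beta)\geq 0$ is automatic.

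Third, these are assembled exactly as in the proof of Proposition \ref{proposition:key}. If $\beta$ is dual positive then $\SWcr(\beta)\geq 0$ by the first step, so iterating the dual untangling lemma writes $\beta$ as a product of $\LWcr(\beta)$ (positive) dual simple elements --- the process stopping when $\LWcr$ reaches $0$, which by the triviality criterion (the last Lemma of Section 2) forces $\beta=1$ --- whence $\ell_{\sf dual}(\beta)\leq \LWcr(\beta)$; combined with $\LWcr(\beta)\leq\ell_{\sf dual}(\beta)$ this yields $\ell_{\sf dual}(\beta)=\LWcr(\beta)$. The dual negative case is entirely parallel: run the mirror-symmetric untangling lemma (for non-trivial $\beta$ with $\LWcr(\beta)\leq 0$ there is $y\in[1,\delta]$ with $\SWcr(y\beta)=\SWcr(\beta)+1$ and $\LWcr(y\beta)\leq 0$), obtained by applying the above to the counterclockwise rotations; since negative dual simple elements act locally by counterclockwise rotations and hence never increase $\Wcr$, one gets $\SWcr(\beta)=-\ell_{\sf dual}(\beta)$ and $\LWcr(\beta)\leq 0$.

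I expect the dual untangling lemma of the second paragraph to be the crux. Two points require care beyond the braid case: verifying that the outermost-layer punctures can be partitioned into \emph{convex} polygons forming a legitimate $r$-symmetric collection, and the local bookkeeping near a polygon that is setwise fixed by $r$ (one surrounding the origin) or degenerate. The branched-cover description of $A(B_{n})$ as the subgroup of the mapping class group of $D_{n+1}$ fixing $p_{0}$ should make the $r$-symmetry automatic --- one performs the untangling downstairs, on $D_{n+1}$, and lifts --- so the only genuinely new analysis is the behaviour at such self-symmetric polygons, the branch point $p_{0}$ downstairs; everything else is a faithful transcription of \cite{iw} together with the classical argument above.
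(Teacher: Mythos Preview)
Your proposal follows essentially the same route as the paper: bound how a dual simple element changes $\Wcr$ (yielding $\SWcr\geq 0$ for dual positive $\beta$ and $\LWcr(\beta)\leq\ell_{\sf dual}(\beta)$), prove a dual untangling lemma to get the reverse inequality, and assemble. The one place where the paper is sharper than your sketch is the construction of $Q$ in the untangling step. Rather than speaking of ``punctures enclosed by the outermost layer'' and worrying about partitioning them into convex pieces, the paper simply takes $\mathcal{A}$ to be the set of $W$-arcs of $D_\beta$ carrying the maximal label $\LWcr(\beta)$; each $a\in\mathcal{A}$ joins two walls and hence determines a chord $e(a)$ between two punctures on $|z|=n$, and $Q$ is the convex hull of $\bigcup_{a\in\mathcal{A}} e(a)$. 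Because the completed diagram $\mD_\beta$ and the labeling are $r$-symmetric by \eqref{eqn:symmetry}, the set $\mathcal{A}$ and hence $Q$ are automatically $r$-symmetric, so $y_Q\in A(B_n)$ with no further analysis. Your proposed detour through the branched cover $D_{2n}\to D_{n+1}$ (untangle downstairs, then lift) would also work, but it is unnecessary: the paper stays in $D_{2n}$ throughout and lets $r$-symmetry of the diagram do the work. The concerns you flag about self-symmetric or degenerate polygons do not require separate treatment in the paper's argument.
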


\begin{proof}[Proof of Proposition \ref{proposition:key_dual}]
First we show the wall-crossing labeling counterpart of Lemma \ref{lemma:win}:
If $\SWcr(\beta) \geq 0$, then there exists a dual simple element $y$ such that $\LWcr(y^{-1}\beta) = \LWcr(\beta)-1$ and that $\SWcr(y^{-1}\beta)\geq 0$. This proves $\LWcr(\beta) \leq \ell_{\sf dual}(\beta)$ if $\beta$ is dual positive.

Let $\mathcal{A}$ be the set of arcs in $D_{\beta}-W$ that attain the largest value of the wall-crossing labelings. Each arc $a \in \mathcal{A}$ connects two distinct walls, say $i(a)$-th and $j(a)$-th wall. For $a \in \mathcal{A}$, we denote the straight line in $D_{2n}$ connecting two punctures $p_{i(a)}$ and $p_{j(a)}$ by $e(a)$.
Let $Q$ be the convex hull of $\bigcup_{a\in \mathcal{A}} e(a)$ in $D_{2n}$. Since the curve diagram is $r$-symmetric, so is $Q$. Hence $Q$ defines a dual simple element $y$ of $A(B_n)$. By definition of $Q$, multiplying by $y^{-1}$ removes arcs with wall-crossing labeling $\LWcr(\beta)$ preserving $\SWcr(\beta) \geq 0$, as desired.
See \cite{ci,iw} for more detailed discussion.

To get the converse inequality, recall that the action of a dual simple element is by rotations of convex polygons. Thus, $\LWcr(y\beta) \leq \LWcr(\beta) +1$ and $ \LWcr(y^{-1} \beta) \leq \LWcr(\beta) +1 $ hold for any $\beta \in A(B_{n})$ and $y \in [1,\delta]$. Moreover, if $\beta$ is dual positive, then $\SWcr(\beta) \geq 0$ because clockwise rotations never decreases the wall-crossing labelling.
In particular, $\ell_{\sf dual}(\beta) \leq \LWcr(\beta)$ holds.
The assertions for $\SWcr(\beta)$ is proved similarly.
\end{proof}

\section{Comments on Garside normal forms}

We close the paper by discussing an application of the curve diagram method to Garside normal forms. \cite[Section 1]{bgg} contains a concise overview of the normal forms.

For $\beta \in A(B_{n})$, the classical Garside structure introduces the {\em classical normal form}
\[ N_{\sf classical}(\beta) = x_{r}\cdots x_{1}\Delta^{p}\;\;\;(p \in\Z, x_{i}\in [1,\Delta])\]
and the dual Garside structure gives the {\em dual normal form}
\[ N_{\sf dual}(\beta) = y_{s} \cdots y_{1}\delta^{q} \;\;\; (q \in\Z, y_{i}\in [1,\delta]) \]
of $\beta$, respectively. 

The {\em classical supremum} and the {\em classical infimum} of $\beta$ are integers defined by $\sup_{\sf classical}(\beta)= p+r$ and $\inf_{\sf classical}(\beta)= p$, respectively. Similarly, the {\em dual supremum} and the {\em dual infimum} are defined by $\sup_{\sf dual}(\beta)= q+s$ and $\inf_{\sf dual}(\beta)= q$, respectively.  The supremum, infimum and the length are related by the formula
\begin{gather}
\label{eqn:length}
\begin{cases}
\ell_{\sf classical}(\beta) = \max\{0,\sup_{\sf classical}(\beta)\} - \min\{0, \inf_{\sf classical}(\beta)\} \\
\ell_{\sf dual}(\beta) = \max\{0,\sup_{\sf dual}(\beta)\} - \min\{0, \inf_{\sf dual}(\beta)\}. \\
\end{cases}
\end{gather}

By (\ref{eqn:length}), Theorem \ref{theorem:lengthformula_c} and Theorem \ref{theorem:lengthformula_dual} actually prove the following relationships between the supremum/infimum in Garside theory and the labelings of curve diagrams.

\begin{corollary}
Let $\beta \in A(B_{n})$. 
\begin{enumerate}
\item $\LWin(\beta) = \sup_{\sf classical}(\beta)$ and $\SWin(\beta)= \inf_{\sf classical}(\beta)$.
\item $\LWcr(\beta) = \sup_{\sf dual}(\beta)$ and $\SWcr(\beta)= \inf_{\sf dual}(\beta)$.
\end{enumerate}
\end{corollary}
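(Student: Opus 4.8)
The final statement is the Corollary relating the curve diagram labelings to the Garside supremum/infimum. Its proof is meant to follow directly from the two main length formulas (Theorems \ref{theorem:lengthformula_c} and \ref{theorem:lengthformula_dual}) together with equation (\ref{eqn:length}). So I need to sketch how to extract the individual equalities $\LWin(\beta)=\sup_{\sf classical}(\beta)$ etc. from the length identities, which only give the combination $\max\{\cdot,0\}-\min\{\cdot,0\}$.

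The strategy: The length formulas give $\ell_{\sf classical}(\beta)=\max\{\LWin(\beta),0\}-\min\{\SWin(\beta),0\}$ and (\ref{eqn:length}) gives $\ell_{\sf classical}(\beta)=\max\{\sup_{\sf classical}(\beta),0\}-\min\{\inf_{\sf classical}(\beta),0\}$. To pin down the individual quantities I need a second relation. The natural one is to track how both sides behave under multiplication by $\Delta$: on the Garside side $\sup_{\sf classical}(\beta\Delta)=\sup_{\sf classical}(\beta)+1$ and $\inf_{\sf classical}(\beta\Delta)=\inf_{\sf classical}(\beta)+1$ (standard property of the classical normal form), and on the curve-diagram side, since $\Delta$ acts as the half-rotation of $D_{2n}$, every winding-number label increases by exactly $1$, so $\LWin(\beta\Delta)=\LWin(\beta)+1$ and $\SWin(\beta\Delta)=\SWin(\beta)+1$. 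Applying the length formula to $\beta\Delta^k$ for all $k\in\Z$ then gives, for every $k$, $\max\{\LWin(\beta)+k,0\}-\min\{\SWin(\beta)+k,0\}=\max\{\sup_{\sf classical}(\beta)+k,0\}-\min\{\inf_{\sf classical}(\beta)+k,0\}$. Two piecewise-linear functions of $k$ of this shape agree for all integers $k$ iff their "breakpoints" agree, i.e. $\LWin(\beta)=\sup_{\sf classical}(\beta)$ and $\SWin(\beta)=\inf_{\sf classical}(\beta)$ (using also $\SWin\le\LWin$ and $\inf\le\sup$). That gives part (1); part (2) is identical with $\Delta$ replaced by $\delta$, using that $\delta$ acts as a rotation of $D_{2n}$ and each wall-crossing label of $\beta\delta$ exceeds that of $\beta$ by one, together with $\sup_{\sf dual}(\beta\delta)=\sup_{\sf dual}(\beta)+1$, $\inf_{\sf dual}(\beta\delta)=\inf_{\sf dual}(\beta)+1$.

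Concretely I would carry it out as follows. First, record the two shift lemmas: (a) $\LWin(\beta\Delta)=\LWin(\beta)+1$ and $\SWin(\beta\Delta)=\SWin(\beta)+1$, which hold because $\Delta$ is the half-rotation and hence adds exactly one to the lift of the direction map along every $V$-arc of the curve diagram (this was already used in the proof of Theorem \ref{theorem:lengthformula_c}); and the Garside-side analogue $\sup_{\sf classical}(\beta\Delta)=\sup_{\sf classical}(\beta)+1$, $\inf_{\sf classical}(\beta\Delta)=\inf_{\sf classical}(\beta)+1$. Second, apply Theorem \ref{theorem:lengthformula_c} and (\ref{eqn:length}) to $\beta\Delta^k$ and use the shift lemmas to turn both into the same function of $k$ with parameters $(\LWin(\beta),\SWin(\beta))$ and $(\sup_{\sf classical}(\beta),\inf_{\sf classical}(\beta))$ respectively. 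Third, a short elementary argument on the function $k\mapsto\max\{a+k,0\}-\min\{b+k,0\}$ (for $b\le a$: it equals $-b-k$ for $k\le -a$ when... — actually it equals $b+k$ negated region, constant $a-b$ on $-a\le k\le -b$, and $a+k$ for $k\ge -b$): its left breakpoint is $-a$ and its right breakpoint is $-b$, so the function determines $a$ and $b$. Apply this with $a=\LWin(\beta)=\sup_{\sf classical}(\beta)$ (after possibly noting both are $\le$... ) and $b=\SWin(\beta)=\inf_{\sf classical}(\beta)$. Fourth, repeat verbatim for the dual side.

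The main obstacle — really the only non-bookkeeping point — is justifying the curve-diagram shift lemma cleanly: that multiplying by the Garside element $\Delta$ (resp. dual Garside element $\delta$) shifts \emph{every} winding-number (resp. wall-crossing) label up by exactly one, not just the extreme ones. For $\Delta$ this is essentially immediate since it is a rigid rotation of the whole disc by $\pi$, which changes the lift $\widetilde{T_\gamma}$ by a global constant $1$; for $\delta$, a rotation by $\pi/n$, one must check that post-composing with a rotation increases the algebraic intersection number with the walls $W$ by exactly one regardless of where along the curve diagram the point sits — this uses that the walls are radial-like arcs joining each puncture to the boundary and that $\delta$ is an $r$-symmetric rotation, so that the curve picks up exactly one new (signed) crossing per full... here some care is needed, and it may be cleaner to deduce the $\delta$-shift indirectly from the known Garside identity $\ell_{\sf dual}(\beta\delta^k)$ behaviour plus Theorem \ref{theorem:lengthformula_dual} applied at two well-chosen values of $k$ (e.g. $k$ very positive and very negative, where the $\max/\min$ are resolved), which sidesteps the geometric claim entirely. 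I would likely present the argument in that indirect form: take $k$ large enough that $\LWcr(\beta)+k$ and $\SWcr(\beta)+k$ are positive and $\sup_{\sf dual}(\beta)+k,\inf_{\sf dual}(\beta)+k$ are positive; then both formulas reduce to a linear expression and comparing gives $\LWcr(\beta)-\SWcr(\beta)=\sup_{\sf dual}(\beta)-\inf_{\sf dual}(\beta)$, and similarly taking $k$ very negative, combined with the $k=0$ identity and the inequalities $\SWcr\le\LWcr$, $\inf_{\sf dual}\le\sup_{\sf dual}$, forces the separate equalities. The same remark streamlines the classical case too.
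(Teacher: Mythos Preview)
Your main approach is correct and in fact more careful than the paper, which offers no proof beyond the sentence ``By (\ref{eqn:length}), Theorem \ref{theorem:lengthformula_c} and Theorem \ref{theorem:lengthformula_dual} actually prove\ldots''. You are right that the bare identity $\max\{a,0\}-\min\{b,0\}=\max\{a',0\}-\min\{b',0\}$ does not force $a=a'$ and $b=b'$; your breakpoint argument applied to $\beta\Delta^{k}$ (resp.\ $\beta\delta^{k}$) is the clean way to finish, and the required curve-diagram shift $\LWin(\beta\Delta)=\LWin(\beta)+1$, $\SWin(\beta\Delta)=\SWin(\beta)+1$ is exactly what is already invoked inside the proof of Theorem~\ref{theorem:lengthformula_c}. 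The dual analogue is no harder: $\delta$ acts as the rigid rotation of $D_{2n}$ by $\pi/n$, which cyclically permutes the walls and hence adds exactly one signed crossing along every initial sub-arc.

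One genuine issue: your ``indirect'' alternative at the end is circular. When you write ``take $k$ large enough that $\LWcr(\beta)+k$ and $\SWcr(\beta)+k$ are positive'' and then compare with the length formula for $\beta\delta^{k}$, you are already assuming $\LWcr(\beta\delta^{k})=\LWcr(\beta)+k$ and $\SWcr(\beta\delta^{k})=\SWcr(\beta)+k$, which is precisely the geometric $\delta$-shift you said you wanted to sidestep. Theorem~\ref{theorem:lengthformula_dual} applied to $\beta\delta^{k}$ only gives you $\ell_{\sf dual}(\beta\delta^{k})$ in terms of $\LWcr(\beta\delta^{k})$ and $\SWcr(\beta\delta^{k})$, not in terms of $\LWcr(\beta)$. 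So drop the indirect variant and justify the $\delta$-shift directly; it is as immediate as the $\Delta$-case.
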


The braid group $\mB_{n}$ also has the classical and the dual Garside structures. 
As a bonus, by comparing the curve diagram theories of $\mB_{2n}$ and $A(B_{n})$, we conclude that the map $\Psi$ preserves both the classical and dual Garside normal forms. 

\begin{corollary}
\label{cor:gmap}
The map $\Psi$ is an embedding that preserves both the classical and the dual Garside normal forms: That is, if the classical and the dual Garside normal form of $\beta \in A(B_{n})$ are
\begin{gather*}
\begin{cases}
N_{\sf classical}(\beta)=x_{r}\cdots x_{1}\Delta^{p}\\
N_{\sf dual}(\beta)=y_{s}\cdots y_{1}\delta^{q},
\end{cases}
\end{gather*}
respectively, then the classical and the dual Garside normal form of the braid $\Psi(\beta) \in \mB_{2n}$ are given by
\begin{gather*}
\begin{cases}
N_{\sf classical}(\Psi(\beta)) = \Psi(x_{r})\cdots\Psi(x_{1})\Psi(\Delta)^{p} \\
N_{\sf dual}(\Psi(\beta)) = \Psi(y_{s})\cdots\Psi(y_{1})\Psi(\delta)^{q},
\end{cases}
\end{gather*}
respectively.

In particular, $\Psi$ is an isometric embedding of $A(B_{n})$ into $\mB_{2n}$ with respect to the word metric on both the classical and the dual simple elements.
\end{corollary}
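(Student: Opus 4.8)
\textbf{Proof proposal for Corollary \ref{cor:gmap}.}
The plan is to transport the entire curve-diagram apparatus of $A(B_n)$ along $\Psi$ into the curve-diagram theory of $\mB_{2n}$, and observe that the two theories were constructed to match. First I would recall that the curve diagram $D_\beta$ of $\beta \in A(B_n)$ is, by definition, the image of the arc $E = [p_n,q_1]$ under a diffeomorphism representing $\Psi(\beta)\in\mB_{2n}$, minimised with respect to intersections with the walls $W=\bigcup W_i$ and with respect to vertical tangencies. The arc $E$ together with $E_0$ forms the arc $\overline{E}$ from $-(n+1)$ to $q_1$, which is (isotopic to) the standard arc defining the braid curve diagram of $\mB_{2n}$ with base point $-(n+1)$. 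Hence the total curve diagram $\overline{D_\beta}$ of $\beta$, viewed in $\mB_{2n}$, is literally the braid curve diagram of $\Psi(\beta)$; and the walls $W_1,\dots,W_{2n}$ are exactly the walls used in the braid theory of $\mB_{2n}$. Consequently the winding-number labeling $\Win$ and the wall-crossing labeling $\Wcr$ of $\overline{D_\beta}$ computed in $A(B_n)$ agree, point by point, with those of the braid curve diagram of $\Psi(\beta)$. There is one bookkeeping point: in $A(B_n)$ we defined $\LWin,\SWin,\LWcr,\SWcr$ using only $D_\beta$, not $\overline{D_\beta}$, so I would check that the extreme labels on $E_0$ never strictly exceed those on $E$; this follows because $E_0$ is a straight horizontal segment disjoint from all walls, carrying the constant labels $\Win=0$, $\Wcr=0$, so it contributes nothing beyond what the $\beta=1$ normalisation already forces, and in particular $\LWin,\SWin$ and $\LWcr,\SWcr$ of $\beta$ coincide with the corresponding quantities of $\Psi(\beta)$ as a braid.

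Granting this identification of labelings, the Corollary on normal forms is immediate from the preceding Corollary and its braid analogue. Indeed, by the Corollary just above, $\LWin(\beta)=\sup_{\sf classical}(\beta)$ and $\SWin(\beta)=\inf_{\sf classical}(\beta)$ in $A(B_n)$, and by the corresponding braid statement \cite{w} (respectively \cite{iw} for the dual case), $\LWin(\Psi(\beta))=\sup_{\sf classical}(\Psi(\beta))$ and $\SWin(\Psi(\beta))=\inf_{\sf classical}(\Psi(\beta))$ in $\mB_{2n}$. Since the labelings agree, we get $\sup_{\sf classical}(\Psi(\beta))=\sup_{\sf classical}(\beta)$ and $\inf_{\sf classical}(\Psi(\beta))=\inf_{\sf classical}(\beta)$, and likewise $\sup_{\sf dual}$, $\inf_{\sf dual}$ are preserved; the isometric-embedding statement then follows from the length formula (\ref{eqn:length}), or directly from Theorems \ref{theorem:lengthformula_c} and \ref{theorem:lengthformula_dual} together with their braid counterparts. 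To upgrade equality of suprema and infima to the stronger claim that $\Psi$ carries the normal form $x_r\cdots x_1\Delta^p$ to $\Psi(x_r)\cdots\Psi(x_1)\Psi(\Delta)^p$ term by term, I would argue inductively on $r$: it suffices to show that $\Psi$ sends the classical Garside element $\Delta$ of $A(B_n)$ to the classical Garside element of $\mB_{2n}$ (both are the clockwise half-rotation of $D_{2n}$, which is clear from Definition \ref{defn:csimple}), and that $\Psi$ sends classical simple elements of $A(B_n)$ into classical simple elements of $\mB_{2n}$, and more precisely that it sends the maximal left-divisor of a given element that is simple in $A(B_n)$ to the maximal simple left-divisor in $\mB_{2n}$. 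The first factor $x_r$ of the classical normal form is characterised by $x_r = \beta\wedge\Delta$ in the $A(B_n)$-lattice; what I actually need is that the $r$-symmetric "dance of punctures" producing $x_r$ in $A(B_n)$ becomes, under $\Psi$, the full clockwise partial rotation producing the first factor in $\mB_{2n}$ — equivalently, that the geometric untangling step of Lemma \ref{lemma:win} applied in $A(B_n)$ and in $\mB_{2n}$ produces the same diffeomorphism. Because Lemma \ref{lemma:win} is proved by a purely geometric recipe (determine the partial order $\prec$ from the $V$-arcs of $\mD_\beta$, move punctures vertically, rotate) and this recipe only refers to the curve diagram $\overline{D_\beta}$ and the winding labels — data which we have shown are intrinsic to $\Psi(\beta)$ as a braid — the element $x$ extracted in $A(B_n)$ and the element extracted in $\mB_{2n}$ are represented by the \emph{same} mapping class, namely $\Psi(x)$. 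The dual case is identical, replacing $V$-arcs by the arcs of $\mathcal{A}$ in $D_\beta - W$ and the vertical moves by the rotation of the convex hull $Q$ in Proposition \ref{proposition:key_dual}; the $r$-symmetric convex polygons used for $A(B_n)$ map to the polygons used for $\mB_{2n}$, and $\Psi(\delta)$ is the rotation by $\pi/n$, which is the dual Garside element of $\mB_{2n}$.

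The step I expect to be the genuine obstacle is the \emph{term-by-term} compatibility, i.e. proving that $\Psi$ matches up normal form \emph{factors}, not merely suprema and infima. Matching suprema/infima is soft, following formally from the length-formula corollaries on both sides. But to conclude that the $i$-th factor $\Psi(x_i)$ is itself the $i$-th simple factor of $N_{\sf classical}(\Psi(\beta))$, I must know that the greedy (left-weighted) decomposition in $A(B_n)$ is compatible with the greedy decomposition in $\mB_{2n}$ under $\Psi$. The clean way to see this, avoiding lattice theory, is to note that the untangling Lemma \ref{lemma:win} (and its dual) is not merely an existence statement: the element $x$ it produces is the \emph{canonical} one obtained from the $V$-arc structure of the completed curve diagram, and by \cite[Theorem 2.1]{w} (resp. \cite{iw}) this canonical $x$ is exactly the first factor of the Garside normal form — in both $A(B_n)$ and $\mB_{2n}$. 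Since, as argued above, the $V$-arc structures of $\mathcal{D}_\beta$ and of the braid curve diagram of $\Psi(\beta)$ coincide and are $r$-symmetric, the canonical untangling elements coincide as mapping classes. Peeling off one factor at a time and inducting on $\ell_{\sf classical}(\beta)$ then yields the claimed identity of normal forms; the residual $\Delta$-power (resp. $\delta$-power) is pinned down by the infimum computation already established. The only subtlety to be careful about is orientation/handedness bookkeeping in the identification of $\overline{E}$ with the standard braid arc and of $W_i$ with the standard braid walls, but this is a finite check fixed once and for all by Figure \ref{fig:curvediagram} and the definition of $\Psi$ in (\ref{eqn:psi}).
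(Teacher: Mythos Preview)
Your core strategy---identify the curve diagram and labelings of $\beta\in A(B_n)$ with those of $\Psi(\beta)\in\mB_{2n}$, then invoke the length formulae on both sides---is exactly what the paper does. The paper's own proof is in fact much terser than yours: it simply observes that the completed curve diagram of $\beta$ is a special case of the braid curve diagram of $\Psi(\beta)$, and that the same length formulae (Theorems~\ref{theorem:lengthformula_c} and~\ref{theorem:lengthformula_dual}) hold for braids, hence $\Psi$ preserves $\ell_{\sf classical}$ and $\ell_{\sf dual}$. That is literally all the paper writes; it does not separately argue the term-by-term normal form statement.

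Two points on your write-up. First, a small inaccuracy: your identification of $\overline{E}$ with the standard braid arc is off, since $\overline{E}$ only runs from $-(n+1)$ to $q_1$, whereas the braid curve diagram uses $[-(n+1),n]$. Relatedly, your bookkeeping claim that $\beta(E_0)$ carries constant label $0$ is not correct: $\beta(E_0)$ is the image of the boundary-to-puncture segment and can certainly wind and cross walls. The clean comparison (and the one the paper uses) goes through the \emph{completed} diagram: the puncture-to-puncture part of the braid curve diagram of $\Psi(\beta)$ is the image of $[-n,n]$, which is $\mD_\beta=D_\beta\cup r(D_\beta)$, and by the $r$-symmetry (\ref{eqn:symmetry}) the extreme labels on $\mD_\beta$ agree with those on $D_\beta$.

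Second, and more substantively: your inductive argument for factor-by-factor preservation of normal forms rests on the claim that Lemma~\ref{lemma:win} produces a \emph{canonical} simple $x$ equal to the first normal-form factor. But in the proof of Lemma~\ref{lemma:win} the element $x$ depends on a choice of total extension $\widetilde{\prec}$ of the partial order $\prec$, so it is not canonical, and nothing in this paper identifies any such $x$ with $\beta\wedge\Delta$. Your appeal to \cite[Theorem~2.1]{w} does not close this: that theorem is the braid length formula, not a statement that the untangling simple coincides with the leading normal-form factor. So the term-by-term claim is not established by your argument as written (nor, strictly speaking, by the paper's short proof, which only justifies the isometric-embedding clause; the paper tacitly defers the stronger statement to the prior references \cite{bdm,dp,pi}).
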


\begin{proof}
For the braid group $\mB_{2n}$, the curve diagram is defined as an image of the real line segment $[-(n+1),n]$, arranged so that the conditions similar to Definition \ref{defn:curvediagram} (i)--(iii) are satisfied \cite{iw,w}.
Moreover, for the curve diagram of braids, the winding number labeling and the wall-crossing labeling are defined in the similar manner. 

By definition of curve diagram and labelings of braids, the (completed) curve diagrams of elements in $A(B_{n})$ are a special case of the curve diagram of braids. Since the same length formulae of Theorem \ref{theorem:lengthformula_c} and Theorem \ref{theorem:lengthformula_dual} hold for the curve diagrams of braids, we conclude $\Psi$ preserves both the classical and the dual Garside length.
\end{proof}

Corollary \ref{cor:gmap} was already known and has appeared in several places \cite{bdm,dp,pi} by observing that the injection $\Psi$ preserves lattice structures from the classical or the dual simple elements. Here we emphasize that curve diagram argument provides a new geometric proof that avoids the use of Garside theory method.

\end{document}